\numberwithin{equation}{section}
\newtheorem{theorem}{Theorem}[section]
\newtheorem{proposition}[theorem]{Proposition}
\theoremstyle{remark}
\newtheorem{remark}[theorem]{Remark}
\theoremstyle{definition}
\newtheorem{definition}[theorem]{Definition}
\newtheorem{problem}[theorem]{Problem}
\newtheorem{example}[theorem]{Example}
\newcommand{\cF}{\mathcal{F}}
\newcommand{\cO}{\mathcal{O}}
\newcommand{\N}{\mathbb{N}}
\newcommand{\R}{\mathbb{R}}
\newcommand{\C}{\mathbb{C}}
\renewcommand\a{\alpha}
\newcommand{\e}{\varepsilon}
\renewcommand{\epsilon}{\varepsilon}
\newcommand{\dd}{\,\mathrm{d}}
\newcommand{\DD}{\mathrm{d}}
\title{On nonlinear stabilization of linearly unstable maps}
\author{Thierry Gallay}
\address{
Institut Fourier, Universit\'e Grenoble Alpes, CNRS, 
F-38000 Grenoble, France}
\email{Thierry.Gallay@univ-grenoble-alpes.fr}
\author{Benjamin Texier}
\address{Institut de Math\'ematiques de Jussieu-Paris Rive Gauche,
Universit\'e Paris-Diderot, UMR CNRS 7586}
\email{benjamin.texier@imj-prg.fr}
\author{Kevin Zumbrun}
\address{Indiana University, Bloomington, IN 47405}
\email{kzumbrun@indiana.edu} 
\thanks{
Research of K.Z. was partially supported
under NSF grant no. DMS-0300487.
% B.T. thanks Indiana University, and K.Z.
%thanks the University of Paris 13 and E.N.S., Paris for their hospitality
%during visits in which this work was partially carried out.
}
\begin{document}

\begin{abstract}
We examine the phenomenon of nonlinear stabilization, exhibiting a
variety of related examples and counterexamples. For
G\^ateaux differentiable maps, we discuss a mechanism of nonlinear
stabilization, in finite and infinite dimensions, which applies in
particular to hyperbolic partial differential equations, and, for
Fr\'echet differentiable maps with linearized operators that are
normal, we give a sharp criterion for nonlinear exponential
instability at the linear rate. These results highlight the
fundamental open question whether Fr\'echet differentiability is
sufficient for linear exponential instability to imply nonlinear
exponential instability, at possibly slower rate.
\end{abstract}
\date{\today}
\maketitle

\section{Introduction}\label{intro}

Since the pioneering work of Lyapunov, it has been classical to deduce
stability properties of equilibria of dynamical systems from spectral
information on the linearized operator. In his memoir~\cite{Ly},
Lyapunov considers general systems of ordinary differential equations
with analytic coefficients, in finite dimensions, and establishes
several fundamental results on stability or instability of equilibria
or time-periodic solutions, using spectral information on appropriate
linearized systems. In particular, an equilibrium solution of an
autonomous system is shown to be asymptotically stable if all
eigenvalues of the associated linearized operator have strictly
negative real parts, and to be unstable if at least one eigenvalue has
a strictly positive real part. Such results were subsequently
generalized to infinite-dimensional systems and less regular
nonlinearities, under suitable spectral assumptions. The reader 
is referred to the monograph of Daleckii and Krein~\cite{DK} 
for a nice account of these early studies. 

In the literature that follows Lyapunov's work, there is a striking
asymmetry between the generalizations of the stability and the
instability theorem, respectively. On the one hand, assuming the
spectral mapping theorem and using a suitable norm, it is relatively
straightforward to show that, for any autonomous dynamical system, an
equilibrium is asymptotically stable if the vector field of the system
is Fr\'echet differentiable at that point and if the spectrum of the
derivative is entirely contained in the open left half-plane. In
contrast, the Lyapunov instability theorem has no counterpart so far
at this level of generality: many sufficient conditions for
instability are known, but they all require either the existence of a
spectral gap, or a somewhat restrictive assumption on the
nonlinearity.

In fact, as of now, we are not aware of any example of nonlinear
stabilization for a linearly unstable Fr\'echet differentiable
dynamical system, nor of any result that would prevent such a
phenomenon to occur under minimal and natural assumptions. The modest
goal of this paper is to discuss the existing results in this
direction, and to give a few generalizations. We also present
examples that illustrate various aspects of this fundamental open
question. 

\subsection{Formulation of the problem} \label{sec:setting} 

To avoid technicalities related to unbounded linear operators, we find
it convenient to formulate the problem in terms of discrete time
systems, namely difference equations, rather than differential
equations as in Lyapunov's work. We thus consider a discrete
evolutionary system of the form
\begin{equation}\label{de} 
 u_{n+1} \,=\, \cF(u_n)\,, \qquad n \ge 0\,, 
\end{equation} 
in a Banach space $B$, where $\cF : B\to B$ is a nonlinear map.  We
always assume that $\cF(0)=0$, so that the origin $u = 0$ is an
equilibrium point. This general framework includes in particular
continuous time-evolutionary systems, such as partial differential
equations with time-independent or time-periodic coefficients, in
which case $\cF$ is defined as the time-$T$ solution map, see
Section~\ref{sec:comments}. As discussed in point (5) of Section
\ref{sec:remarks} below, partial differential operators with smooth
coefficients do not necessarily generate smooth solutions maps. We
suppose nevertheless that there is associated with $\cF$ a linearized
map $L$, which is typically a Fr\'echet or G\^ateaux derivative of
$\cF$ at the origin.

The main question is:

\begin{problem}\label{mainprob}
{\em Assuming that $L$ has spectral radius $r(L)>1$,
corresponding to linear exponential instability, under what general 
conditions may we deduce also nonlinear instability?}
\end{problem}

By nonlinear instability, we mean instability in the sense of Lyapunov
of the equilibrium $u=0$ for the evolutionary system \eqref{de}. A
related question is whether exponential instability occurs in the
situation described by Problem~\ref{mainprob}, and, if so, whether it
occurs at the linear rate $\rho = r(L)$. Here are the precise
definitions:

\begin{definition}\label{def:nins} 
The equilibrium $u = 0$ is
\begin{itemize}[leftmargin=15pt,topsep=3pt,itemsep=3pt]

\item {\em unstable (in the sense of Lyapunov)} if there exists 
$\epsilon > 0$ such that, for any $\delta > 0$, one can find a 
sequence $(u_n)$ solution of \eqref{de} such that $0 < |u_0|_B \le 
\delta$ and $|u_n|_B \ge \epsilon$ for some $n \in \N$. 

\item {\em exponentially unstable at rate}
$\rho > 1$ if there exists $\epsilon > 0$ and $C > 0$ such that, 
for any $\delta > 0$, one can find a sequence  $(u_n)$ solution of 
\eqref{de} satisfying $0 < |u_0|_B \le \delta$ and $|u_n|_B
\ge C \rho^n |u_0|_B$ for all $n \in \N$ such that 
$\max(|u_0|_B,\dots ,|u_n|_B) \le \epsilon$. 
\end{itemize}

\end{definition}

It is obvious that exponential instability at any rate $\rho > 1$ 
implies instability in the sense of Lyapunov, with the same value 
of $\epsilon$. 

\subsection{Classical results} \label{sec:classical} 

Our understanding of Problem~\ref{mainprob} is grounded in  
the following classical results:

%\begin{enumerate}[leftmargin=20pt,topsep=5pt,itemsep=3pt]
\smallskip
(1) The {\it Lyapunov instability theorem} \cite{Ly}, which states 
that if $B$ is finite-dimensional and if the linear operator 
$L : B \to B$ with $r(L) > 1$ approximates $\cF$ near the origin
in the sense that, for some $a > 0$ and $b > 0$ small enough,  
\begin{equation}\label{remest1}
 |\cF(u) - Lu|_B \,\le\,  b|u|_B \qquad\hbox{whenever}\quad 
 |u|_B \le a\,,   
\end{equation}
then the equilibrium $u=0$ is exponentially unstable. Remark that 
condition~\eqref{remest1} is always fulfilled if $\cF$ 
is Fr\'echet differentiable at the origin and $L=d\cF|_{u=0}$. 
Even in that particular case, instability may not occur at 
the linear rate, as is shown by Proposition~\ref{ex:scal}. 

\smallskip
(2) The extension of Lyapunov's instability theorem to the 
infinite-dimensional case under the assumption of a {\it spectral
gap}, see e.g. \cite[Theorem~VII.2.2]{DK}. Precisely, let 
$L$ be a bounded linear operator on the Banach space $B$ 
with spectral radius $r(L) > 1$, and assume that the 
spectrum $\sigma(L)$ does not intersect the unit circle 
$\{\lambda \in \C\,|\,|\lambda| = 1\}$. Then, if 
the map $\cF : B \to B$ satisfies \eqref{remest1} for 
some $a > 0$ and $b > 0$ small enough, the equilibrium $u=0$ is 
exponentially unstable. More generally, the same conclusion
is obtained if one assumes that the linear operator $L$ is 
$\rho$-pseudohyperbolic \cite{Ru} instead of hyperbolic, 
namely if the spectrum $\sigma(L)$ does not intersect the 
circle of radius $\rho$, for some $\rho \in [1,r(L))$. It 
should be emphasized that the smallness assumption
on the parameter $b$ in \eqref{remest1} depends on the 
linear operator $L$, in such a way that $b \to 0$ when 
the spectral gap shrinks to zero. 

\smallskip
(3) The {\it Rutman-Daleckii theorem} \cite[Theorem~VII.2.3]{DK}, 
and its more general version due to Henry \cite[Theorem 5.1.5]{He}, 
which states that if $L : B \to B$ is a bounded linear operator
which approximates the map $\cF$ at a superlinear rate at the origin 
in the sense that 
\begin{equation}\label{remest2}
  |\cF(u) -Lu |_B \,\le\, b|u|_B^{1 + p} \qquad\hbox{whenever}\quad 
  |u|_B\le a\,,
\end{equation}
for some $a>0$, $b>0$, and $p > 0$, then $r(L) > 1$ implies nonlinear
exponential instability of the origin, at the linear rate $\rho =
r(L)$. Condition \eqref{remest2} is of course stronger than
\eqref{remest1}, and implies in particular that $\cF$ is Fr\'echet
differentiable at the origin with $L=d\cF|_{u=0}$. It holds 
typically if the map $\cF$ has the regularity $C^{1,p}$ near the
origin, for some $p \in (0,1]$, namely if the Fr\'echet derivative $u
\mapsto d\cF(u)$ is H\"older continuous with exponent $p$. Note that,
in the Rutman-Daleckii theorem, the spectral instability condition
$r(L) > 1$ is the only assumption made on the linear operator $L$.

%\end{enumerate}
\smallskip
If the map $\cF$ in \eqref{de} is Fr\'echet differentiable at the
origin, the results above show that linear exponential instability
implies nonlinear instability if the linearized operator
$L=d\cF|_{u=0}$ either has a spectral gap, or approximates
the full map $\cF$ at a superlinear rate. Both conditions are
sufficient, but neither one is known to be necessary. 

\subsection{Remarks and examples} \label{sec:remarks}

We do not give a definite answer to Problem \ref{mainprob}, but only 
make a few remarks which, we hope, shed some light on what the solution 
may be. 

\smallskip
(4)  We first recall that even a very weak nonlinearity can stabilize
a map that is linearly unstable, but not at exponential rate. 
This phenomenon already happens in finite dimensions, as is 
demonstrated by the following simple example
\begin{equation}\label{2Dsys}
  \cF(v,w) \,=\, L \begin{pmatrix} v \\ w \end{pmatrix} - 
  \begin{pmatrix} v^3 \\ w^3 \end{pmatrix}, \qquad
  L = \begin{pmatrix} 1 & 1 \\ 0 & 1 \end{pmatrix},
\end{equation}
for which the equilibrium $(v,w) = (0,0) \in B = \R^2$ is linearly
unstable, in the sense that $L^n$ is unbounded as $n \to \infty$, and
yet nonlinearly asymptotically stable, see Section~\ref{sec:app}.
In that case we have of course $r(L) = 1$.

In Section~\ref{sec:Jordan} below, we construct in the same spirit an
infinite-dimensional map $\cF$ for which the origin $u=0$ is stable,
although the linearized operator $L=d\cF|_{u=0}$ has the property that
$L^n$ grows nearly exponentially as $n \to \infty$. This shows that
the linear exponential instability assumption $r(L) > 1$ is essential
in Problem~\ref{mainprob}.

\smallskip
(5)  Our next contribution is an intriguing example which indicates
that, in the absence of a spectral gap, stabilization may be 
possible if the nonlinearity does not satisfy a superlinearity 
condition such as \eqref{remest2}. 

\begin{example}\label{Gallex}
Let $\chi : \R \to [0,2]$ be a smooth function such that ${\rm
supp}(\chi) \subset [-1,1]$ and $\chi(0) = 2$.  Let also $h :
[0,\infty) \to [0,\infty)$ be an increasing continuous function such
that $h(0) = 0$.  In the Hilbert space $B = L^2(\R)$, we consider
the map $\cF : B \to B$ defined by
\begin{equation}\label{def:Gal}
  \bigl(\cF(u)\bigr)(x) \,=\, \chi(x)\,u\bigl(x - h(|u|_B)\bigr)\,, 
  \qquad x \in \R\,,
\end{equation} 
for all $u \in B$. Then $\cF(0) = 0$, and $\cF$ is differentiable at
the origin in the sense of G\^ateaux, but not in the sense of
Fr\'echet, with derivative $L$ given by
\[
  (Lu)(x) \,=\, \chi(x)\,u(x)\,, \qquad x \in \R\,.
\]
The operator $L$ is clearly self-adjoint in $B$ with spectrum
$\sigma(L) = [0,2]$, so that the origin $u = 0$ is linearly
exponentially unstable. However, as is shown in Section~\ref{sec:pde}
below, the origin is nonlinearly stable if $h(s)$ converges
sufficiently slowly to zero as $s \to 0$, for instance if $h(s) 
\ge C|\ln s|^{-1}$ for some sufficiently large $C > 0$.
\end{example}

Example~\ref{Gallex} shows in particular that nonlinear stabilization
is possible if the linearization of the map $\cF$ at the origin is
understood in a weaker sense than the usual Fr\'echet derivative. We
elaborate on that question in Section~\ref{sec:pde}, where we first
prove all assertions above and then discuss a few related examples.
One of them corresponds to a seemingly minor modification 
of the map \eqref{def:Gal}, namely
\begin{equation}\label{def:Gal2}
 \bigl(\cF(u)\bigr)(x) \,=\, \chi(x)\,u\bigl(2x - h(|u|_B)\bigr)\,, 
  \qquad x \in \R\,,
\end{equation}
which has nevertheless a very different behavior. Unlike in
Example~\ref{Gallex}, nonlinear stabilization is now possible even for
a very smooth nonlinearity, such as $h(s) = s^2$. More remarkably, we
also provide an example of a simple hyperbolic partial differential
equation, for which the time-one solution map behaves qualitatively as
in \eqref{def:Gal2}, so that nonlinear stabilization occurs. The
choice of a hyperbolic equation here is not accidental: in such
systems, the flow is typically not Fr\'echet differentiable at
equilibria, so that weaker notions of linear tangent maps have to be
introduced. In contrast, parabolic equations with smooth
nonlinearities typically generate Fr\'echet differentiable flows, in
which case the Rutman-Daleckii theorem applies and shows, in view of
\eqref{remest2}, that nonlinear stabilization is impossible if the
flow is sufficiently smooth, for instance if the differential $u
\mapsto d\cF(u)$ is H\"older continuous with exponent $p > 0$.
Finally, we also give in Section~\ref{sec:pde} a finite-dimensional
example of a G\^ateaux differentiable map for which nonlinear
stabilization occurs.

\smallskip
These examples show that, even in finite dimensions, it is necessary in
Problem~\ref{mainprob} to understand the linearized operator $L$ in
the usual Fr\'echet sense.

\smallskip
(6) In Section~\ref{sec:normal}, we study in some detail the important
particular case where $B$ is an infinite-dimensional Hilbert space,
the map $\cF : B \to B$ is Fr\'echet differentiable at the origin, and
the linearized operator $L=d\cF|_{u=0}$ is selfadjoint or normal.  In
that situation, even in the absence of a spectral gap, it is possible
to separate the unstable part of the spectrum using spectral
projections. We prove that linear exponential instability implies
nonlinear instability provided that, for some $a > 0$,
\begin{equation}\label{remest3}
  |\cF(u) - Lu|_B \,\le\, \alpha(|u|_B)|u|_B\,, 
  \qquad\hbox{whenever}\quad |u|_B\le a\,,
\end{equation}
where $\alpha : [0,a] \to [0,+\infty)$ is a nondecreasing function 
satisfying the integrability condition
\begin{equation}\label{intalpha}
  \int_0^a \frac{\alpha(s)}{s}\dd s \,<\, \infty\,. 
\end{equation}
This condition is obviously satisfied if $\alpha(s) = b s^p$ for
some $p > 0$, hence our result sharpens the Rutman-Daleckii theorem in
the normal case by replacing \eqref{remest2} with the weaker
assumption \eqref{remest3}-\eqref{intalpha}.

It is interesting to observe that the function $\a(s) = |\ln s|^{-\gamma}$ 
satisfies \eqref{intalpha} for any $\gamma > 1$. As for the 
limiting case $\gamma = 1$, Example~\ref{Gallex} shows on the 
contrary that nonlinear stabilization can occur for a map $\cF$ 
such that
\begin{equation} \label{XB} 
  |\cF(u) - L u|_B \,\le\, C |\ln |u|_B|^{-1} |u|_X\,, \qquad 
  \hbox{for all }u \in X \subset B\,,
\end{equation}
where $X = H^1(\R)$, $B = L^2(\R)$, and $L : B \to B$ is a selfadjoint 
operator. This is an indication that condition~\eqref{intalpha} may be 
close to optimal in the normal case -- an indication but not a proof, 
since the nonlinear map in Example~\ref{Gallex} is not Fr\'echet 
differentiable, as is reflected in the fact that $X \neq B$ 
in \eqref{XB}.

\subsection{Conclusion} \label{sec:conclusion} 

We are now in position to give a more precise formulation of 
Problem~\ref{mainprob}:

\begin{problem}\label{mainprob2}
{\em Assuming Fr\'echet differentiability of the map $\cF$ at the 
origin and exponential instability of the linearized operator
$L=d\cF|_{u=0}$, corresponding to $r(L) > 1$, what are sharp conditions 
on the remainder $|\cF(u) -Lu|_B$ for {\rm (a)} nonlinear instability,  
{\rm (b)} nonlinear exponential instability, {\rm (c)}
nonlinear exponential instability at linear rate $\rho = r(L)$?}
\end{problem}

In the normal case, the results of Section~\ref{sec:normal} give 
a satisfactory answer to (c), and based on Example~\ref{Gallex} 
we conjecture that conditions \eqref{remest3}, \eqref{intalpha} 
are also sharp for (b). In the general case, these questions 
remain essentially open, although one may conjecture that 
nonlinear stabilization, if it occurs at all, requires both
nonseparability of the spectrum and a very slow vanishing 
rate of the nonlinearity.

\subsection{Further comments and applications} 
\label{sec:comments} 

We try here to make a connexion between the abstract questions
discussed in this paper and some concrete problems studied in the
literature, especially in fluid mechanics where stability issues are
of great theoretical and practical importance.  Consider equations in
the general form
\begin{equation}\label{pde} 
  \frac{\DD u}{\DD t} \,=\, Au + f(u)\,, \qquad t \ge 0\,, 
\end{equation} 
where the linear operator $A : D(A) \to B$ is the generator of a
strongly continuous semigroup in a Banach space $B$, and the 
nonlinearity $f : B \to B$ is locally Lipschitz and satisfies 
\begin{equation} \label{est:f0} 
  |f(u)|_B \,=\, o(|u|_B), \qquad \hbox{as }  |u|_B \to 0\,.
\end{equation} 
In this case, any solution $u_n = u(n)$ of \eqref{pde} evaluated at
integer times satisfies the recursion relation \eqref{de}, where $\cF$
is the time-1 solution map, and at the linear level one has the
relation $L=d\cF|_{u=0} = \exp(A)$. Most of the results presented
above for the difference equation \eqref{de} have their counterpart
for the differential equation \eqref{pde}-\eqref{est:f0}, in
particular the Lyapunov instability theorem under the assumption of a
spectral gap and the Rutman-Daleckii theorem \cite{SS}.

Many time-evolutionary partial differential equations, while admitting
the general form \eqref{pde} (with the possible addition of constraint
equations and boundary conditions), have nonlinearities $f$ which are
not Lipschitz. Often $f(u)$ involves spatial derivatives of the
unknown $u,$ and satisfies an estimate of the form
\begin{equation} \label{est:f}
  |f(u)|_B \,\lesssim\, |u|_X |u|_B\,, \qquad \hbox{for small 
  enough }|u|_X\,, \hbox{ with } X \hookrightarrow B\,.
\end{equation}
The loss of regularity in estimate \eqref{est:f} can sometimes be
compensated for by regularizing estimates for the semigroup. For the
Navier-Stokes equations, these are provided by analytic semigroup
theory; for Schr\"odinger and related dispersive operators, by
Strichartz-type estimates. In the absence of regularizing estimates,
particular features of the system can be exploited, such as the
cancellation $(f(u),u)_B = 0,$ with $B = L^2$, which holds for perfect
incompressible fluids governed by the Euler equations and delimited
by impermeable boundaries. In any case, it is necessary to deal with
several function spaces. In particular, the assumptions of stability
and instability theorems become complicated and case-dependent when
formulated at the level of the differential equation
\eqref{pde}-\eqref{est:f}. For examples of such statements, we refer
to \cite{SS}.

A great advantage of working with the ``integrated'' formulation
\eqref{de} is that all results can be stated in a single Banach space,
which typically consists of sufficiently regular functions so that the
Cauchy problem for \eqref{pde} is locally well-posed in this
space. This point of view is satisfying in terms of simplicity and
generality, and avoids any confusion between the assumptions of the
stability/instability theorems and what is needed to solve the Cauchy
problem. However, in most applications, spectral information on the
linearized problem is easier to obtain at the level of the generator
$A$, which is typically an explicit differential operator, whereas the
semigroup $\exp(A)$ has no simple representation. In addition, it
often happens that the spectrum of $A$ is more conveniently studied in
a low regularity space such as $L^2$, while the Cauchy problem is only
known to be well-posed in a smaller space, such as the Sobolev space
$H^s$ for sufficiently large $s > 0$. Finally, the notions themselves
of stability or instability may be sensitive to the choice of the
function space (see e.g.~\cite{Li1}), and it is not clear that using a
framework where the Cauchy problem is well-posed gives the most
appropriate definition of nonlinear instability. In fact, a very
conclusive notion of instability is obtained if small initial
perturbations in a strong norm (e.g. in $H^s$ for large $s > 0$) are
shown to evolve into large discrepancies measured in a weak norm
(e.g. in $L^2$).  Strong instabilities in this sense have been
established by Grenier \cite{Gr} for the 2D Euler equation, and the
same approach was subsequently used to establish transverse
instabilities of travelling waves in dispersive Hamiltonian models
\cite{RT1,RT2}.  In sum, we point out that the instability theorems
formulated for system \eqref{de} in a single Banach space, although
potentially applicable to a variety of situations, do not subsume the
numerous results obtained for particular PDEs, especially in fluid
mechanics \cite{FSV,Yu,BGS,VF,Li2,FPS} and for related models
\cite{GS,LS,FPV}.

We add a few comments concerning the scope of the results presented in
this paper. As for the linear part of the system, emphasis is put on
the situation where no spectral gap exists, so that no version of the
Lyapunov instability theorem can be invoked. This point of view is
quite reasonable, because in applications the linearized operator
often has continuous spectrum without any gap, especially in
nonparabolic equations or for systems on unbounded spatial domains. In
such situations, the Rutman-Daleckii theorem is applicable if the
solution map is sufficiently smooth, and this is why our results
concentrate on systems where the solution map is merely $C^1$, or is
even less regular so that the linearization at the origin has to be
understood in a weak sense, for instance as a G\^ateaux derivative.
Such a general point of view is not of pure academic interest: there
are many examples of partial differential equations for which the
solution map is not even of class $C^1$, no matter how smooth the
nonlinearities. Nonlinear hyperbolic systems typically belong to this
category \cite{Ka2}, whereas parabolic equations usually generate 
smooth solution maps if the nonlinearities are smooth. 

We conclude this introduction with two remarks on related questions.
First, we recall that a possible approach to prove nonlinear
instability in a system such as \eqref{de} or \eqref{pde} is to
construct an unstable invariant set, which contains negative
trajectories of the system that converge to the equilibrium as $n \to
-\infty$ or $t \to -\infty$. As is well known, if the equilibrium is
hyperbolic, the unstable set is a manifold that is as smooth as
allowed by the nonlinearity \cite{Ru}. Interestingly enough, if $B$ is
a Hilbert space and if the linearized operator is normal, it is
possible to construct an unstable invariant set even in the absence of
spectral gap, see \cite[Theorem~7.4]{EZ}, provided the nonlinearity is
sufficiently smooth. Such a result strengthens the conclusion of the
Rutman-Daleckii theorem in the normal case. From another perspective,
we would like to mention that Problem~\ref{mainprob} is strongly
related to a different question that can be formulated for the
differential equation \eqref{pde}: {\it assuming that the linearized
  equation $u' = Au$ is ill-posed, under what conditions can we deduce
  that the nonlinear system \eqref{pde} is also ill-posed?} This
question does not make sense for the difference equation \eqref{de},
and will therefore not be discussed here. We refer the interested
reader to \cite{BS,GT} for the analysis of a few examples.

%%%%%%%%%%%%%%%%%%%%%%%%%%%%%%%%%%%%%%%%%%

\section{Stabilization of near-exponentially unstable 
linear maps} \label{sec:Jordan}

The classical example \eqref{2Dsys} of stabilization of a neutrally
unstable equilibrium can be extended in the infinite-dimensional case
to a much more dramatic example of stabilization of a near-exponentially 
unstable linear map. This shows that, for nonnormal operators in
infinite dimensions, any linear instability that is less than
exponential, in the sense that $r(L) = 1$, is susceptible of nonlinear
stabilization by a map $\cF$ satisfying estimate \eqref{remest2} for
some arbitrarily large $p > 0$. In our example, we have the lower
bound
\[ 
  |L^n|_{B\to B} ~\ge\, \prod_{1 \le k \le n} m_k\,, \qquad \hbox{for all }
  n \ge 1\,, 
\]
where $(m_k)$ is a nonincreasing sequence of real numbers converging to
$1$ arbitrarily slowly as $k \to \infty$. An appropriate choice of the
sequence $(m_k)$ thus leads to linear instability at any
sub-exponential rate.
  
Consider the space $B = \ell_2(\N)$ of square-integrable real
sequences $u = (u^0, u^1, u^2, \dots)$, and let $S : B \to B$ 
be the right shift defined by $Su = (0, u^0, u^1, \dots)$ for 
all $u \in B$. Given a nonincreasing sequence $(m_n)$ of real 
numbers such that $m_1 \le 2$ and $m_n \to 1$ as $n \to \infty$, 
we denote by $M : B \to B$ the associated multiplication 
operator: $M u = (m_0 u^0, m_1 u^1,\dots)$. We consider the map 
$\cF: B \to B$ defined by
\begin{equation}\label{FFdef}
  \cF(u) \,=\, (1 - |u|^p_B) M S u\,, \qquad u \in B\,,
\end{equation}
where $p > 0$. It is clear that $\cF$ is Fr\'echet differentiable 
at the origin, with linear tangent map $L = M S$. An easy calculation 
shows that, for any $n \ge 1$, 
\begin{equation}\label{MSn}
  |(MS)^n|_{B\to B} \,=\, |(MS)^n \big(1, 0, 0, \dots\big)|_B 
  ~=\, \prod_{1 \le k \le n} m_k\,,
\end{equation}
and this implies that the spectral radius of $MS$ is equal to 
one, because $m_n \to 1$ as $n \to \infty$. 

\begin{proposition} \label{prop:nil}
For any $p > 0$ and any sequence $(m_n)$ as above, the origin $u = 0$ 
is a stable fixed point of system \eqref{de} with $\cF$ given by 
$\eqref{FFdef}$. In addition, for the particular choice
\begin{equation}\label{mnspecial}
  m_n \,=\,  1 + \frac{1}{\ln (n+2)}\,,
\end{equation}
there exists $C > 0$ such that
\begin{equation}\label{low:lin}
  |(MS)^n|_{B\to B} \,=\, |(MS)^n \big(1, 0, 0, \dots\big)|_B 
  \,\ge\, C e^{n/(2 \ln n)}, \qquad \hbox{for all }n \ge 2\,,
\end{equation}
indicating near-exponential linear instability.
\end{proposition}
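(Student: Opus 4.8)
The plan is to reduce the dynamics to a scalar recursion for $r_n := |u_n|_B$ and then exploit the rigid structure of the weighted shift. The key observation is that, since $u_{n+1} = (1-r_n^p)MSu_n$ and the shift $S$ pushes supports one slot to the right, an immediate induction shows that $u_n$ is supported on the coordinates $\{j : j \ge n\}$ for every $n \ge 0$. Because $(m_k)$ is nonincreasing, this gives $|MSu_n|_B \le m_{n+1}|u_n|_B$, whence the scalar estimate
\[
  r_{n+1} \,\le\, (1-r_n^p)\,m_{n+1}\,r_n \,\le\, m_{n+1}\,r_n\,,
\]
valid as long as $r_n \le 1$. The point is that the effective amplification factor at step $n$ is not the uniform bound $m_1$, but $m_{n+1}$, which tends to $1$; this is precisely the mechanism behind the stabilization.

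Second, I would analyze this recursion. Set $\theta_n := \bigl((m_{n+1}-1)/m_{n+1}\bigr)^{1/p}$, a nonincreasing sequence tending to $0$. One checks that if $r_n \ge \theta_n$ then $(1-r_n^p)m_{n+1}\le 1$, hence $r_{n+1}\le r_n$, while if $r_n < \theta_n$ then at worst $r_{n+1}\le 2 r_n$ (using $m_{n+1}\le m_1\le 2$). Given $\epsilon \in (0,1)$, pick $N_0$ with $\theta_{N_0}\le \epsilon/2$. A two-case induction then shows that $r_{N_0}\le \epsilon$ implies $r_n \le \epsilon$ for all $n \ge N_0$: when $r_n \ge \theta_n$ the sequence does not grow, and when $r_n < \theta_n \le \epsilon/2$ one gets $r_{n+1}\le 2 r_n < \epsilon$. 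For the initial steps $n \le N_0$ the crude bound $r_n \le r_0\prod_{k=1}^{N_0}m_k$ holds, so choosing $\delta := \epsilon/\prod_{k=1}^{N_0}m_k$ secures both $r_n\le\epsilon$ for $n\le N_0$ and $r_{N_0}\le\epsilon$. This proves Lyapunov stability of the origin.

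For the lower bound in the special case $m_k = 1 + 1/\ln(k+2)$, I would take logarithms in \eqref{MSn}. Using $\ln(1+x)\ge (\ln 2)\,x$ on $[0,1]$ (since $x\mapsto\ln(1+x)/x$ decreases) together with $\ln(k+2)\le \ln(n+2)$ for $k\le n$,
\[
  \ln |(MS)^n|_{B\to B} \,=\, \sum_{k=1}^{n}\ln\!\Bigl(1+\frac{1}{\ln(k+2)}\Bigr)
  \,\ge\, (\ln 2)\sum_{k=1}^{n}\frac{1}{\ln(k+2)} \,\ge\, (\ln 2)\,\frac{n}{\ln(n+2)}\,.
\]
Since $2\ln 2 > 1$, one has $n^{2\ln 2}\ge n+2$ for all $n \ge 4$, which is exactly $(\ln 2)\frac{n}{\ln(n+2)}\ge \frac{n}{2\ln n}$; hence $|(MS)^n|_{B\to B}\ge e^{n/(2\ln n)}$ for $n\ge 4$, and the finitely many values $n=2,3$ are absorbed into the constant $C$.

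The only conceptual obstacle is the first step: realizing that the support of $u_n$ marches off to infinity, so the relevant growth rate degrades from $m_1$ to $m_{n+1}\to 1$. Once that is secured, the scalar analysis of the second paragraph and the elementary logarithmic estimate of the third are routine.
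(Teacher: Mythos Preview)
Your proof is correct and follows essentially the same approach as the paper: the same support-shifting observation yielding the scalar recursion $r_{n+1}\le m_{n+1}(1-r_n^p)r_n$, and the same two-case stability argument (your threshold $\theta_n$ is just an explicit form of the paper's condition $m_N(1-\epsilon^p/2^p)<1$). For the linear growth estimate the paper uses an integral comparison rather than your minimum-term bound on the sum, but both are equally elementary and yield the claimed rate.
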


\begin{proof} We first remark that, if $|u|_B \le 1$, then 
\begin{equation}\label{bd:F} 
  |\cF(u)|_B \,\le\, 2 |u|_B\,,
\end{equation}
because $m_n \le m_1 \le 2$ for all $n \ge 1$. Let $(u_n)$ be the solution
of \eqref{de} with initial data $u_0 \in B$. Then $u_n$ has the form 
\[ 
  u_n \,=\, \Big(\underbrace{0, \dots, 0}_{\mbox{\footnotesize{$n$ terms}}}, \star\,, 
  \dots\Big)\,,
\]
due to the repeated action of the right shift. In particular, as long as 
$|u_n|_B \le 1$, there holds
\begin{equation}\label{bd:un}
  |u_{n+1}|_B \,\le\, m_{n+1} (1 - |u_n|^p_B) |u_n|_B\,, \qquad 
  \hbox{hence}\quad 
  |u_n|_B \,\le\,|u_0|_B \prod_{1 \le k \le n} m_k\,.
\end{equation}

Now, given any $\e \in (0,1)$, there exists $N = N(\e) \in \N^*$ such 
that  
\begin{equation}\label{Ne}
   m_N \Big(1 - \frac{\e^p}{2^p}\Big) \,<\, 1\,.
\end{equation}
Assume that $|u_0|_B < \delta$, where $\delta > 0$ is small enough so that 
\begin{equation} \label{de-e}
  \delta \prod_{1 \le n \le N} m_n \,<\, \e/2\,.
\end{equation}
Then, by the second inequality in \eqref{bd:un}, for all $n \in 
\{1,\dots,N\}$ there holds $|u_n|_B < \e/2$. In fact, we shall 
show that $|u_n|_B < \e$ for all $n \in \N$, which proves that 
the origin is a stable equilibrium. 

Indeed, if this is not the case, there exists a smallest integer 
$N_0 > N$ such that $|u_{N_0}|_B \ge \e$. Let $n = N_0-1 \ge N$, 
so that $|u_n|_B < \e$. If $|u_n|_B < \e/2$, estimate \eqref{bd:F} 
implies that 
\[
  |u_{N_0}|_B \,=\, |u_{n+1}|_B \,=\, |\cF(u_n)|_B 
  \,\le\, 2 |u_n|_B \,<\, \e\,,
\] 
which is a contradiction. On the other hand, if $\e/2 \le |u_n|_B < \e$, 
it follows from \eqref{Ne} that
\[
  m_{n+1}(1 - |u_{n}|_B^p) \,\le\, m_{N}(1 - \e^p/2^p) \,<\, 1\,,
\]
because $m_{n+1} = m_{N_0} \le m_{N}$ by construction. Using the first 
inequality in \eqref{bd:un}, we deduce that $|u_{N_0}|_B = |u_{n+1}|_B
< |u_n|_B < \e$, which is again a contradiction. Thus there 
exists no such $N_0$, and nonlinear stability is established.
  
\smallskip
We next derive a lower bound on the linear growth rate, in the 
particular case of the sequence $(m_n)$ given by \eqref{mnspecial}. 
In view of \eqref{MSn} we have
\[ 
  \ln |(MS)^n (1,0,0,\dots)|_B ~=\, \sum_{1 \le k \le n} 
  \ln(1 + \a_k)\,, \qquad \hbox{where}\quad \a_k \,=\, \frac{1}{\ln (k+2)}\,.
\]
But $\ln(1 + \a_k) \ge \a_k/2$ because $\a_k \in [0,2]$, hence
\[
  \ln |(MS)^n (1,0,0,\dots)|_B \,\ge\, \frac{1}{2} \sum_{1 \le k \le n} \a_k\,.
\]
Since $x \mapsto \ln(x+2)^{-1}$ is a decreasing function of $x \ge 0$, 
we deduce
\[
  \ln |(MS)^n (1,0,0,\dots)|_B \,\ge\, \frac{1}{2} \int_1^{n+1} 
  \frac{\dd x}{\ln(x+2)} \,\ge\, \frac12 \frac{x+2}{\ln(x+2)}
  \Big|_{x=1}^{x=n+1} \,=\, \frac{1}{2} \frac{n+3}{\ln(n+3)} - 
  \frac{3}{2 \ln 3}\,,
\]
and estimate \eqref{low:lin} easily follows. 
\end{proof}

In our example the linear operator $L = MS$ is not normal, because the
adjoint operator $L^* = S^*M$ (where $S^*$ is the left shift) does not
commute with $L$. In fact $L$ is a compact perturbation of the right
shift $S$, which in turn corresponds to an infinite-dimensional Jordan
block.  Remark that the phenomenon described in Proposition
\ref{prop:nil} cannot occur in the normal case, because spectral
stability of a normal operator $L$ implies that $|L|_{B\to B} = r(L)
\le 1$.  From this point of view, the example above is rather related
to the stabilization of unstable pseudospectra \cite{TE}. Note finally 
that Proposition~\ref{prop:nil} holds for arbitrarily large values 
of $p > 0$, and this is in sharp contrast with the situation described 
in the Rutman-Daleckii theorem, where the superlinearity condition 
\eqref{remest2} with any $p > 0$ is sufficient to prevent nonlinear 
stabilization. 

%%%%%%%%%%%%%%%%%%%%%%%%%%%%%%%%%%%%%%%%%%

\section{Examples of nonlinear stabilization}\label{sec:pde}

We discuss here Example~\ref{Gallex} and several variants illustrating
the possibility of nonlinear stabilization for linearly exponentially
unstable equilibria. In all these examples, however, the map $\cF$ is
not Fr\'echet differentiable and the linearization has to be
understood in a weaker sense, typically as a G\^ateaux derivative.

\subsection{The main example} In $B = L^2(\R)$, we consider the map
$\cF : B \to B$ defined by \eqref{def:Gal}. We recall that $\chi : \R
\to [0,2]$ is a smooth function such that ${\rm supp}(\chi) \subset
[-1,1]$ and $\chi(0) = 2$, and that $h : [0,\infty) \to [0,\infty)$ is
an increasing continuous function satisfying $h(0) = 0$. 

\begin{proposition}\label{Gallprop}
Given any $u_0 \in B$, the solution of \eqref{de} satisfies 
$|u_n|_B \to 0$ as $n \to \infty$. In addition, for any $\delta > 0$
and any $u_0 \in B$ with $|u_0|_B \leq \delta$, the sequence $(u_n)$ 
issued from $u_0$ satisfies
\begin{equation}\label{Mdelest}
  \max_{n \in \N}|u_n|_B \,\le\, \delta\,2^{\frac{2}{h(\delta)}+1}\,,
\end{equation}
which proves nonlinear stability if $h(s)$ converges sufficiently slowly
to zero as $s \to 0$. 
\end{proposition}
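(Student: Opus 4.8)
The plan is to exploit two features of the map $\cF$ defined in \eqref{def:Gal}. On the one hand, since $0 \le \chi \le 2$ and translations are isometries of $L^2(\R)$, one has the crude contraction bound $|\cF(u)|_B \le 2|u|_B$, so that $|u_{n+1}|_B \le 2|u_n|_B$ for every $n$. On the other hand, $\cF$ transports mass strictly to the right, by the positive amount $h(|u|_B)$, and then truncates by multiplication with $\chi$, whose support lies in $[-1,1]$. I would first turn this second observation into a support lemma, proved by induction on $n$: for every $n \ge 1$, either $u_n = 0$, or
\[
  \mathrm{supp}(u_n) \subseteq \bigl[-1 + \sigma_{n-1},\, 1\bigr],
  \qquad \sigma_{n-1} := \sum_{k=1}^{n-1} h(|u_k|_B),
\]
with the convention that the empty sum vanishes, and in the latter case necessarily $\sigma_{n-1} \le 2$. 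The base case is immediate since $\mathrm{supp}(\cF(u_0)) \subseteq \mathrm{supp}(\chi)$; for the inductive step, applying $\cF$ to a function supported in $[-1 + \sigma_{n-1}, 1]$ yields one supported in $[-1 + \sigma_n, 1 + h(|u_n|_B)]$ before truncation, hence in $[-1 + \sigma_n, 1]$ after multiplication by $\chi$, and this interval is empty as soon as $\sigma_n > 2$. The fact I would record for later use is: whenever $u_n \neq 0$, one has $\sum_{k=1}^{n-1} h(|u_k|_B) \le 2$.

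For the convergence statement I would argue by a dichotomy. If some iterate vanishes, say $u_{n_0} = 0$, then $u_n = \cF(0) = 0$ for all $n \ge n_0$ and the claim is trivial. Otherwise $u_n \neq 0$, hence $|u_n|_B > 0$, for all $n$; letting $n \to \infty$ in the support lemma gives $\sum_{n \ge 1} h(|u_n|_B) \le 2 < \infty$, so $h(|u_n|_B) \to 0$, and since $h$ is increasing with $h(0) = 0$ this forces $|u_n|_B \to 0$ (if some subsequence stayed above a positive constant $c$, then $h(|u_n|_B) \ge h(c) > 0$ along it).

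For the quantitative estimate \eqref{Mdelest} I would fix $\delta \ge |u_0|_B$; if $h(\delta) = 0$ the right-hand side of \eqref{Mdelest} is infinite and there is nothing to prove, so I may assume $h(\delta) > 0$. Set $T := \{ n \ge 1 : |u_n|_B > \delta \}$. First, $T$ is finite: otherwise no iterate vanishes (a vanishing iterate kills all later ones), so $\sum_{k \ge 1} h(|u_k|_B) \le 2$ by the support lemma, whence $|T|\,h(\delta) \le \sum_{n \in T} h(|u_n|_B) \le 2$, contradicting $|T| = \infty$. Writing $N := \max T$ (the case $T = \emptyset$ being trivial), the support lemma applied to $u_N \neq 0$ gives $\sum_{k=1}^{N-1} h(|u_k|_B) \le 2$, and since each element of $T \setminus \{N\}$ lies in $\{1, \dots, N-1\}$ and contributes at least $h(\delta)$ to that sum, I obtain $|T| \le 2/h(\delta) + 1$. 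To conclude, I would combine this with $|u_n|_B \le \delta$ for all $n \notin T$ (in particular $n = 0$) and $|u_{n+1}|_B \le 2|u_n|_B$: along any maximal block $\{m, \dots, m+j-1\}$ of consecutive integers contained in $T$, the index $m - 1$ is not in $T$, so $|u_{m-1}|_B \le \delta$, hence $|u_{m-1+i}|_B \le 2^i \delta$ for $0 \le i \le j$; as $j \le |T| \le 2/h(\delta) + 1$, this bounds $|u_n|_B$ on the block --- and therefore everywhere, comparing with the trivial bound $\delta$ off $T$ --- by $\delta\,2^{2/h(\delta)+1}$, which is \eqref{Mdelest}.

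The one point that requires a little care is the bookkeeping around the last non-vanishing iterate: the support lemma controls $\sigma_{n-1} = \sum_{k=1}^{n-1} h(|u_k|_B)$ but not the term $h(|u_n|_B)$ itself, and it is exactly this omitted term that produces the ``$+1$'' in the exponent of \eqref{Mdelest} (it is harmless for the convergence statement, since beyond a vanishing iterate the sequence is identically zero). The genuinely substantive idea is the support-transport mechanism --- that repeated application of $\cF$ uses up a fixed ``translation budget'' equal to the width of $\mathrm{supp}(\chi)$ --- and once that is in hand, the rest of the argument is elementary.
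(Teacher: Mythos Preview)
Your proof is correct and follows essentially the same route as the paper's: both hinge on the observation that $u_n \neq 0$ forces $\sum_{k=1}^{n-1} h(|u_k|_B) \le 2$ (your support lemma is the geometric rephrasing of the paper's product representation formula, with your $\sigma_{n-1}$ equal to the paper's $S_1^n$), yielding the cardinality bound $|T| \le 2/h(\delta)+1$, after which the crude estimate $|u_{n+1}|_B \le 2|u_n|_B$ gives \eqref{Mdelest}. The only cosmetic difference is that the paper locates the maximum of $|u_n|_B$ and walks backward to the last index with $|u_{N-k}|_B \le \delta$, whereas you argue block by block over $T$; these are equivalent.
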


\begin{proof}
Using the specific form of the map $\cF$ defined in \eqref{def:Gal}, 
we establish by induction on $k$ the following representation formula 
for the solution of \eqref{de}:
\begin{equation}\label{Gallun}
  u_n(x) \,=\, \biggl(\,\prod_{j=n-k+1}^n \chi(x-S_j^n)\biggr)\,
  u_{n-k}(x - S_{n-k}^n)\,, \qquad x \in \R\,, \quad n \ge k \geq 1\,,
\end{equation}
where the spatial shifts $S_j^n$ are defined by 
\[
   S_j^n \,=\, \sum_{\ell=j}^{n-1} h(|u_\ell|_B) \,\ge\, 0\,,
   \qquad 0 \le j \le n\,,
\]
with the convention that $S_j^n = 0$ if $j = n$. 

Given $\epsilon > 0$, let $I_\epsilon = \{n \ge 1\,|\,|u_n|_B \ge 
\epsilon\} \subset \N$. We claim that $I_\epsilon$ is a finite set, 
with cardinality
\begin{equation}\label{Nepsbdd}
  N_\epsilon \,=\, \mathrm{card}(I_\epsilon) \,\le\, \frac{2}{h(\epsilon)}+1\,. 
\end{equation}
This implies in particular that $|u_n|_B \to 0$ as $n \to \infty$. To
prove \eqref{Nepsbdd}, we first observe that, if $n \in I_\epsilon$,
then $S_1^n \le 2$. Indeed, if $S_1^n > 2$, then $\chi(x)\chi(x-S_1^n)
= 0$ for all $x \in \R$ by the support property of $\chi$, and
the relation \eqref{Gallun} with $k = n$ shows that $u_n \equiv 0$, 
so that $n \notin I_\epsilon$. Using the monotonicity of the function $h$, 
we deduce that, for any $n \in I_\epsilon$,
\[
  2 \,\ge\, S_1^n \,=\, \sum_{j=1}^{n-1} h(|u_j|_B) \,\ge\, \mathrm{card}
  \bigl(I_\epsilon \cap \{1,\dots,n-1\}\bigr) h(\epsilon)\,,
\]
and \eqref{Nepsbdd} easily follows. 

We next turn to a proof of \eqref{Mdelest}. Given any $\delta > 0$ and 
any $u_0 \in B$ with $|u_0|_B \le \delta$, we take $N \in \N$ such that 
\[
  |u_N|_B \,=\, \max_{n \in \N}|u_n|_B\,.
\]
Such an $N$ exists since the sequence $(|u_n|_B)_n$ converges to
zero. If $|u_N|_B \leq \delta$, then \eqref{Mdelest} is proved. 
Otherwise, in the backward sequence of real numbers $|u_N|_B$, 
$|u_{N-1}|_B, \dots, |u_0|_B$ we find the first term or terms to be 
greater than $\delta$, but not all, since $|u_0|_B \leq \delta$. 
That is, there exists an integer $k \le N$ such that $|u_{N-k}|_B \le 
\delta$ and $|u_j|_B > \delta$ for $j \in [N{-}k{+}1,N]$. By 
definition we have $k \leq N_\delta$, where $N_\delta$ is defined by 
\eqref{Nepsbdd} with $\epsilon = \delta$. Using the representation 
\eqref{Gallun} and the fact that $\chi \le 2$, we conclude that
\[
 |u_N|_B \,\le\, 2^k |u_{N-k}|_B \,\le\, 2^{N_\delta}\delta \,\le\,
 \delta \,2^{\frac{2}{h(\delta)}+1}\,,
\]
which proves \eqref{Mdelest}. Note that the right-hand side of 
\eqref{Mdelest} converges to zero as $\delta \to 0$ provided
\[
  \frac{2\ln(2)}{h(\delta)} + \ln(\delta) \,\to\, -\infty\,,
  \quad \hbox{as}\quad \delta \to 0\,,
\]
which is the case, for instance, if $h(s) \ge C|\ln s|^{-1}$ for 
some $C > 2\ln(2)$. 
\end{proof}

\begin{remark}\label{abmore}
More generally, if ${\rm supp}(\chi) \subset [-a,a]$ for some
$a > 0$ and $0 \le \chi \le b$ for some $b > 1$, the bound 
\eqref{Mdelest} becomes
\[
  \max_{n \in \N}|u_n|_B \,\le\, \delta\,b^{\frac{2a}{h(\delta)}+1}\,,
  \qquad \hbox{provided}\quad |u_0|_B \le \delta\,. 
\]
\end{remark}

In Example~\ref{Gallex}, the map $\cF$ defined by \eqref{def:Gal} is not Fr\'echet 
differentiable at the origin. Indeed, there holds
\[
   \lim_{\lambda \to 0+} \lambda^{-1}\cF(\lambda u) \,=\,
   Lu \,=\, \chi u\,, \qquad \hbox{for any }u \in B\,,
\]
but the convergence is not uniform in $u$ over the unit sphere $S = \{u
\in B \,|\, |u|_B = 1\}$. Uniform convergence holds on any subset of
$S$ that is bounded in the Sobolev space $H^s(\R)$ for some $s > 0$,
but such subsets are not invariant under the evolution defined by
$\cF$. Note however that, in view of \eqref{Gallun}, the large Fourier
modes of the initial data $u_0$ are only moderately amplified under
evolution, because the function $\chi$ is assumed to be smooth. If on
the contrary we take $\chi = 2 {\bf 1}_{[-1,1]}$, then
Proposition~\ref{Gallprop} remains valid, but the spectrum of $L$ now
consists of two eigenvalues $0$ and $2$, so that $L$ has a spectral
gap, and the generalized Lyapunov instability theorem mentioned in the
introduction shows that nonlinear stabilization is impossible if the
map is Fr\'echet differentiable. In that case, large Fourier modes are
immediately created because the Fourier transform of $\chi$ decays
slowly at infinity.

\subsection{A variant with additional contraction of the support}
\label{subsec32}
The following variant of Example~\ref{Gallex} exhibits an even more
efficient mechanism of nonlinear stabilization. Consider the Banach
space $B = C^0_0([-1,0];\R)$ of all continuous functions $u : [-1,0] 
\to \R$ satisfying $u(-1) = 0$, equipped with the norm $|u|_\infty = 
\max_{-1 \le x \le 0} |u(x)|$. Let $E$ be the extension operator 
defined for any $u \in B$ by $(E u)(x) = u(x)$ if $x \in [-1,0]$, 
and $(E u)(x) = 0$ if $x \in \R\setminus [-1,0]$. We define a map 
$\cF : B \to B$ by  
\begin{equation}\label{nF}
  (\cF(u))(x)  \,=\, 2 (E u)(2x-|u|_{\infty}^2)\,, \qquad 
  x \in [-1,0]\,,
\end{equation}
and we introduce the associated linearized operator
\[
  (Lu)(x) \,=\, 2(E u)(2x)\,, \qquad x \in [-1,0]\,.
\]
As before, it is easy to verify that $L$ is the G\^ateaux derivative of
$\cF$ at $u = 0$, but that $\cF$ is not Fr\'echet differentiable at
the origin. Moreover, we obviously have $|Lu|_\infty = 2|u|_\infty$
for any $u \in B$, hence $|L^n u|_\infty = 2^n|u|_\infty$ for all 
$n \ge 1$. Thus $r(L) = 2$, indicating linear exponential 
instability.

\begin{proposition}\label{one} Given $u_0 \in B$ the solution
of \eqref{de} satisfies, for any $\a \in [0,1]$, 
\begin{equation}\label{nlstab_infty}
  |u_n|_\infty \,\le\, 2^{3(1-\a)/2}\,2^{n(3\a - 1)/2} \,|u_0|_\infty^\a\,, \qquad 
  n \,\ge\, 1\,.
\end{equation}
Taking $0 < \a < 1/3$ proves nonlinear asymptotic stability.  
\end{proposition}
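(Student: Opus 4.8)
\emph{Proof plan.} The plan is to follow the orbit through two scalar quantities: the amplitude $a_n := |u_n|_\infty$ and an upper bound $c_n \in [-1,0]$ for the left endpoint of the support of $u_n$, so that $\mathrm{supp}(u_n) \subseteq [c_n,0]$. Directly from the one-step relation $u_{n+1}(x) = 2(Eu_n)(2x - a_n^2)$ one reads off two facts. First, the crude estimate $a_{n+1} \le 2\,|u_n|_\infty = 2 a_n$, hence $a_n \le 2^n a_0$ for all $n \ge 0$. Second, the support transition rule: since $u_{n+1}(x) \ne 0$ forces $2x - a_n^2 \in \mathrm{supp}(u_n) \subseteq [c_n,0]$, i.e. $(c_n + a_n^2)/2 \le x \le 0$, the iterate $u_{n+1}$ vanishes identically as soon as $a_n^2 > |c_n|$, while otherwise $\mathrm{supp}(u_{n+1}) \subseteq [c_{n+1},0]$ with $c_{n+1} := (c_n + a_n^2)/2 \ge c_n/2$. (One may of course also record the closed-form analogue of \eqref{Gallun} obtained by iterating, but the one-step version suffices.)

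The second step propagates the support bound and extracts genuine decay. Starting from $c_0 = -1$, the inequality $c_{n+1} \ge c_n/2$ gives by induction $c_n \ge -2^{-n}$ as long as $u_0,\dots,u_n$ are all nonzero. Now fix $m \ge 1$. If $u_m \equiv 0$ then $a_m = 0$ and nothing is to prove; otherwise, since $\cF(0)=0$, none of $u_0,\dots,u_m$ is zero, so $c_{m-1} \ge -2^{-(m-1)}$, and the transition rule applied at step $m-1$ yields $a_{m-1}^2 \le |c_{m-1}| \le 2^{-(m-1)}$, i.e. $a_{m-1} \le 2^{-(m-1)/2}$. Combined with $a_m \le 2a_{m-1}$ this gives the key estimate
\[
  a_m \,\le\, 2^{(3-m)/2}\,, \qquad m \ge 1\,.
\]

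The last step is a plain interpolation between $a_n \le 2^n a_0$ and $a_n \le 2^{(3-n)/2}$: writing $a_n = a_n^{\a}\,a_n^{1-\a}$ for $\a \in [0,1]$ and $n \ge 1$ gives
\[
  a_n \,\le\, (2^n a_0)^{\a}\,\bigl(2^{(3-n)/2}\bigr)^{1-\a}
  \,=\, 2^{3(1-\a)/2}\,2^{n(3\a-1)/2}\,a_0^{\a}\,,
\]
because the exponent of $2$ is $n\a + (3-n)(1-\a)/2 = 3(1-\a)/2 + n(3\a-1)/2$; this is exactly \eqref{nlstab_infty}. For $0 < \a < 1/3$ the factor $2^{n(3\a-1)/2}$ tends to $0$, so $|u_n|_\infty \to 0$; and since $n(3\a-1)/2 \le 0$ for all $n \ge 0$, the bound gives $\sup_{n\ge 0}|u_n|_\infty \le \max\bigl(\delta,\,2^{3(1-\a)/2}\delta^{\a}\bigr)$ whenever $|u_0|_\infty \le \delta$, which tends to $0$ as $\delta \to 0$ because $\a < 1$. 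Hence the origin is nonlinearly asymptotically stable.

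I expect the delicate point to be the bookkeeping in the second step: the induction $c_n \ge -2^{-n}$ must be asserted only along the stretch of nonvanishing iterates, and the dichotomy ``$u_{n+1} \equiv 0$ or $a_n^2 \le |c_n| \le 2^{-n}$'' is precisely the device that couples the geometric contraction of the support to the amplitude, upgrading the naive doubling $a_{n+1} \le 2a_n$ into the decay $a_n \le 2^{(3-n)/2}$.
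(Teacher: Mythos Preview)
Your proof is correct and follows essentially the same route as the paper's: both arguments record the support contraction $\mathrm{supp}(u_n)\subset[-2^{-n},0]$, use the dichotomy that either $|u_n|_\infty^2$ exceeds $2^{-n}$ (forcing $u_{n+1}\equiv 0$) or $|u_n|_\infty\le 2^{-n/2}$, derive $|u_n|_\infty\le 2^{(3-n)/2}$ for $n\ge 1$, and interpolate with the trivial bound $|u_n|_\infty\le 2^n|u_0|_\infty$. The only cosmetic difference is that the paper organizes the dichotomy via the index set $I=\{n:|u_n|_\infty\le 2^{-n/2}\}$, whereas you track the support endpoint $c_n$ explicitly and argue directly on $a_{m-1}$; your bookkeeping about nonvanishing iterates is slightly more explicit but not materially different.
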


\begin{proof}
We have, evidently, 
\begin{equation}\label{ii}
  |u_n|_\infty \,\le\, 2^n | u_0|_\infty\,, \qquad 
  \hbox{and}\quad {\rm supp}\, u_n \,\subset\, [-2^{-n},0]\,.
\end{equation}
Let $I = \{n \in \N\,|\, |u_n|_\infty \le 2^{-n/2}\}$. If $n \notin
I$, then $|u_n|_\infty^2 > 2^{-n}$, and using the information on the
support in \eqref{ii} together with definition \eqref{nF} we deduce
that $u_{n+1} \equiv 0$, so that $n + 1 \in I$. As a consequence, 
if $n \notin I$ and $n \ge 1$, we must have $n{-}1 \in I$, hence 
$|u_n|_\infty \le 2 |u_{n-1}|_\infty \le 2\cdot2^{-(n-1)/2}$. Thus we have 
shown that
\[
  |u_n|_\infty \,\le\, 2^{-(n-3)/2}\,, \qquad \hbox{for all }
  n \ge 1\,,
\]
and interpolating with the bound in \eqref{ii} we easily obtain
estimate \eqref{nlstab_infty}.
\end{proof}

As in Example~\ref{Gallex} above, nonlinear stabilization occurs here
because the map $\cF$ involves a translation of the argument $u$,
whose support (under the repeated action of $\cF$) eventually leaves
the interval $[-1,0]$ where linear instability is at play.  The
novelty is that $\cF$ also shrinks the support of $u$ by a factor of
$2$, so that the support of $u_n$ for large $n$ is contained in a very
small one-sided neighborhood of the origin, and can thus easily be
pushed away from the interval $[-1,0]$.  This explains why
stabilization can be realized here with a very small spatial shift
$|u|_\infty^2$, whereas a larger translate $h(|u|_B)$ was necessary in
Example~\ref{Gallex}.  However, the contraction of the support has
also the effect of creating large Fourier modes in the solution $u_n$,
so that the modified map \eqref{nF} is certainly further away from
being Fr\'echet differentiable than the original map \eqref{def:Gal}.
As was already mentioned, failure of Fr\'echet differentiability at
the origin implies failure of the remainder bound \eqref{remest2},
hence Proposition~\ref{one} does not contradict the Rutman-Daleckii
theorem mentioned in the introduction.

\subsection{A hyperbolic partial differential equation.}
\label{subsec33}
Interestingly enough, the phenomenon illustrated in the previous 
example can occur if $\cF$ is the time-one map associated 
with an autonomous partial differential equation. To see this, 
consider the quasilinear hyperbolic equation
\begin{equation}\label{localpde}
  u_t +\big((-x+u^2)u\big)_x \,=0\,, \qquad x \in [-1,0]\,,
  \qquad u(-1,t) \,=\, 0\,.
\end{equation}
We assume that the initial data $u_0$ belong to the 
convex cone
\[
  B_+ \,=\, \{u \in B \,|\, u \hbox{ is nondecreasing}\}\,,
\]
where $B = C^0_0([-1,0];\R)$ is the same function space
as in Section~\ref{subsec32}. For such data Eq.~\eqref{localpde} 
has a unique global solution for positive times, which can be 
constructed by the method of characteristics and satisfies 
$u(\cdot,t) \in B_+$ for all $t \ge 0$. 

Indeed, the characteristic 
curve $X(t)$ originating from point $x_0 \in [-1,0]$ satisfies the 
differential equation $X'(t) = -X(t) + 3 u(X(t),t)^2$ with initial 
condition $X(0) = x_0$. Since
\[
  \frac{\dd}{\dd t}\,u(X(t),t) \,=\, u_t(X(t),t) + [-X(t) + 3 
  u(X(t),t)^2]u_x(X(t),t) \,=\, u(X(t),t)\,,
\]
we have $u(X(t),t) = e^t u_0(x_0)$, which in turn implies that
\begin{equation}\label{Xchar}
  X(t) \,=\, e^{-t}\Bigl(x_0 + u_0(x_0)^2(e^{3t}-1)\Bigr)\,,
  \qquad t \,\ge\, 0\,.
\end{equation}
As $u_0 \in B_+$, for each $t \ge 0$ the right-hand side of
equation \eqref{Xchar} is a strictly increasing function of $x_0 \in
[-1,0]$, which maps $[-1,0]$ onto $[-e^{-t}, a_0]$, with $a_0 = e^{-t}
u_0(0)^2 (e^{3 t} - 1) \ge 0$. In particular, for each $t \ge 0$
and each $x \in [-e^{-t},0]$ there exists a unique $x_0 \in [-1,0]$
such that $X(t) = x$.  Denoting $x_0 = X_0(x,t)$, we obtain the
representation formula
\begin{equation}\label{urep}
  u(x,t) \,=\, \begin{cases} e^t u_0(X_0(x,t)) & \!\!\hbox{if } 
  \,x \in [-e^{-t},0]\,,\\ 0 & \!\!\hbox{if } \,x \in [-1,-e^{-t}]\,,
\end{cases} 
\end{equation}
which gives a global solution to \eqref{localpde} for 
positive times, such that $u(\cdot,t) \in B_+$ for all $t \geq 0$.

Moreover, the time-$t$ map defined by Eq.~\eqref{urep} is G\^ateaux
differentiable at the origin in the cone $B_+$, with derivative given
by the time-$t$ map of the linearized equation $\tilde u_t - (x\tilde
u)_x = 0$. Indeed, given $u_0$ in $B_+$ and $\lambda > 0$, consider 
the function $u_\lambda$ defined in such a way that $\lambda u_\lambda(x,t)$ 
is the (unique) solution of \eqref{localpde} with initial data 
$\lambda u_0 \in B_+$. For $t > 0$ and $x \in [-e^{-t}, 0]$, we 
know from \eqref{Xchar}, \eqref{urep} that
\begin{equation} \label{urepl}  
  u_\lambda(x,t) \,=\, e^t u_0(X_{0,\lambda}(x,t))\,, \qquad \mbox{with} \quad 
  e^t x \,=\, X_{0,\lambda}(t,x) + \lambda^2 u_0(X_{0,\lambda}(t,x))^2 
  (e^{3 t} - 1)\,.
\end{equation}
Since $e^t x - \lambda^2 |u_0|_\infty (e^{3 t}-1) \le X_{0,\lambda}(x,t) 
\le e^t x$, we deduce from \eqref{urep} and \eqref{urepl} that, 
for any fixed $t > 0$, the function $u_\lambda(\cdot,t)$ converges 
uniformly on $[-1,0]$ to $\tilde u(\cdot,t)$ as $\lambda \to 0$, 
where
\begin{equation}\label{tildurep}
  \tilde u(x,t) \,=\, \begin{cases} e^t u_0(e^t x) & \!\!\hbox{if } 
  \,x \in [-e^{-t},0] \\ 0 & \!\!\hbox{if } \,x \in [-1,-e^{-t}]
\end{cases} 
\end{equation} 
is the unique solution in $B_+$ to the linearized equation
$\tilde u_t - (x\tilde u)_x = 0$ with initial data $u_0$. 

We observe that $|\tilde u(\cdot,t)|_\infty = e^t |u_0|_\infty$ for
any $t \ge 0$, so that the linearized evolution is exponentially
unstable, but the following result shows that the nonlinear evolution
is asymptotically stable.

\begin{proposition}\label{tthree}
For all initial data $u_0 \in B_+$ the solution of 
\eqref{localpde} given by \eqref{urep} satisfies, for any 
$\a \in [0,1]$, 
\begin{equation}\label{ustab}
  |u(\cdot,t)|_\infty \,\le\, C^{1-\a}  \,e^{(3\a-1)t/2} 
  \,|u_0|_\infty^\a\,, \qquad t \ge 1\,,
\end{equation}
where $C = (1-e^{-3})^{-1/2}$. Taking $0 < \a < 1/3$ proves 
nonlinear asymptotic stability.  
\end{proposition}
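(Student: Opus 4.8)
The plan is to reproduce, in continuous time, the argument behind Proposition~\ref{one}: I will establish two a priori bounds on $|u(\cdot,t)|_\infty$ --- a crude exponential bound and a genuine decay bound reflecting the escape of mass through the boundary $x=0$ --- and then interpolate between them.

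The crude bound $|u(\cdot,t)|_\infty \le e^t|u_0|_\infty$, valid for all $t\ge 0$, is read off directly from the representation \eqref{urep}, since $X_0(\cdot,t)$ takes values in $[-1,0]$ and $u_0\in B_+$ is nonnegative; it is the analogue of $|u_n|_\infty\le 2^n|u_0|_\infty$. For the decay bound I would argue as follows. By \eqref{Xchar}, the characteristic labelled $x_0\in[-1,0]$ carries the value $e^tu_0(x_0)$ and sits at time $t$ at the point $e^{-t}\bigl(x_0+u_0(x_0)^2(e^{3t}-1)\bigr)$. Since $u_0\ge 0$ is nondecreasing, $x_0\mapsto x_0+u_0(x_0)^2(e^{3t}-1)$ is nondecreasing, equals $-1$ at $x_0=-1$, and equals $u_0(0)^2(e^{3t}-1)\ge 0$ at $x_0=0$; hence either $u_0\equiv 0$ (and \eqref{ustab} is trivial) or there is a unique $x_0^\ast\in(-1,0]$ with
\[
   x_0^\ast \,=\, -u_0(x_0^\ast)^2(e^{3t}-1)\,,
\]
and $x_0^\ast$ is precisely the label of the characteristic that reaches $x=0$ at time $t$. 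All characteristics labelled $x_0>x_0^\ast$ have already exited the interval $[-1,0]$, so they do not contribute to $|u(\cdot,t)|_\infty$; combining this with the monotonicity of $u_0$ and of the characteristic map, one finds that the supremum of $u(\cdot,t)$ over $[-1,0]$ is attained at $x=0$ and equals $e^tu_0(x_0^\ast)$. The constraint $x_0^\ast\ge -1$ forces $u_0(x_0^\ast)^2(e^{3t}-1)\le 1$, whence
\[
  |u(\cdot,t)|_\infty \,=\, e^tu_0(x_0^\ast) \,\le\, e^t(e^{3t}-1)^{-1/2}
  \,=\, e^{-t/2}(1-e^{-3t})^{-1/2} \,\le\, Ce^{-t/2}\,, \qquad t\ge 1\,,
\]
with $C=(1-e^{-3})^{-1/2}$. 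This plays the role of the bound $|u_n|_\infty\le 2^{-(n-3)/2}$ in the proof of Proposition~\ref{one}.

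To finish I would interpolate the two estimates: for $t\ge 1$ and $\a\in[0,1]$,
\[
  |u(\cdot,t)|_\infty \,=\, |u(\cdot,t)|_\infty^\a\,|u(\cdot,t)|_\infty^{1-\a}
  \,\le\, \bigl(e^t|u_0|_\infty\bigr)^\a\bigl(Ce^{-t/2}\bigr)^{1-\a}
  \,=\, C^{1-\a}\,e^{(3\a-1)t/2}\,|u_0|_\infty^\a\,,
\]
which is exactly \eqref{ustab}. For $0<\a<1/3$ the exponent $(3\a-1)/2$ is negative, and since on the bounded interval $[0,1]$ one has the trivial bound $|u(\cdot,t)|_\infty\le e|u_0|_\infty$, this yields decay of $|u(\cdot,t)|_\infty$ to $0$ and nonlinear asymptotic stability of the origin in $B_+$.

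The step requiring the most care is the identification $|u(\cdot,t)|_\infty = e^tu_0(x_0^\ast)$: one must verify that the characteristic map $x_0\mapsto X(t)$ is strictly monotone so that its inverse $X_0(\cdot,t)$ is well defined on the image, that the labels beyond $x_0^\ast$ really leave the domain $[-1,0]$ and are therefore irrelevant to the sup norm there, and that the degenerate case $u_0\equiv 0$ is set aside. None of this is deep --- it is bookkeeping with the explicit formulas \eqref{Xchar} and \eqref{urep} --- but it is where the stabilization mechanism, namely mass escaping through the boundary faster than the linearized rate $e^t$, actually enters.
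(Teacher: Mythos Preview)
Your proof is correct and follows essentially the same route as the paper: both establish the crude bound $|u(\cdot,t)|_\infty\le e^t|u_0|_\infty$ and the decay bound $|u(\cdot,t)|_\infty\le Ce^{-t/2}$ from the characteristic constraint $u_0(x_0)^2(e^{3t}-1)\le |x_0|\le 1$, then interpolate. The only cosmetic difference is that the paper bounds $u(x,t)$ pointwise for every $x\in[-e^{-t},0]$ and then takes the supremum, whereas you first argue by monotonicity that the supremum is attained at $x=0$ and bound only there; the key inequality and the interpolation step are identical.
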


\begin{proof}
Given $t > 0$ and $x \in [-e^{-t},0]$, we denote $x_0 = X_0(x,t) 
\in [-1,0]$. We know from \eqref{Xchar} that $x_0 + u_0(x_0)^2
(e^{3t}-1) = e^t x \le 0$, hence we deduce from \eqref{urep}
that
\[
   0 \,\le\, u(x,t) \,=\, e^t u_0(x_0) \,\le\, 
   \frac{e^t |x_0|^{1/2}}{(e^{3t}-1)^{1/2}}\,.  
\]
Assuming $t \ge 1$ this gives the estimate $|u(\cdot,t)|_\infty \le C 
e^{-t/2}$, and interpolating with the trivial bound $|u(\cdot,t)|_\infty 
\le e^t |u_0|_\infty$ we obtain the desired result. 
\end{proof}

\begin{remark}\label{kruzhkov}
The restriction of the analysis to the convex cone $B_+$ represents a considerable
simplification, because no shocks can develop and the solution can be
constructed using the method of characteristics. For general initial
data $u_0 \in B$ with no monotonicity assumption, global existence of
a unique entropic weak solution to the scalar conservation law
\eqref{localpde} can be established following the approach of Kruzkhov
\cite{Kr}. However this solution may now have discontinuities, hence it
is necessary to work in a larger function space, such as
$L^\infty([-1,0])$.  In this more general setting, the time-$t$ map
associated with \eqref{localpde} is not differentiable at the origin,
even in the sense of G\^ateaux, so that the definition of the
linearized system becomes a more delicate question. These
technicalities are not related in any way to the nonlinear
stabilization effect that we discuss here, hence we prefer avoiding
them by working in the cone $B_+$, which however is not a Banach
space.
\end{remark}

\subsection{A finite-dimensional example}
\label{subsec34}
To conclude this section, we exhibit a  two-dimensional 
G\^ateaux differentiable map for which nonlinear stabilization 
occurs via a similar mechanism as in the previous examples. 
In $B = \R^2$ we consider the map $\cF$ defined by
\begin{equation}\label{F2dim}  
   \cF(v,w) \,=\, (2v \mathbf{1}_{D^c}(v,w) \,,\, w/2+ v^2/4)\,,
   \qquad (v,w) \in B\,,
\end{equation}
where $D = \{(v,w) \,|\, 0 < |w| < v^2\} \subset \R^2$ and
$\mathbf{1}_{D^c}$ denotes the indicator function of $D^c =
\R^2\setminus D$. Note that $D^c$ contains the axis $w = 0$, and that
$\cF(v,0) = (2v,v^2/4)$. The map $\cF$ is not continuous, but it is
continuous at the origin and G\^ateaux differentiable there with
derivative $L(v,w) = (2v,w/2)$, which is both linear and exponentially
unstable. However, it is easy to show that the origin is
asymptotically stable for the dynamics associated with the full map
\eqref{F2dim}.

\begin{proposition}\label{prop2dim}
For all initial data $(v_0,w_0) \in \R^2$ the solution of 
\eqref{de} with $\cF$ given by \eqref{F2dim} satisfies, 
for all $n \in \N$,
\begin{equation}\label{2dimest}
  |v_n|^2 + |w_n| \,\le\, 4\,\Bigl(\frac34\Bigr)^{n-1}\bigl(v_0^2+|w_0|\bigr)\,.
\end{equation}
\end{proposition}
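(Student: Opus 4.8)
The plan is to control the ``energy'' $\phi_n:=v_n^2+|w_n|$ and to show it decays geometrically. The starting point is the one-step dichotomy: since the second component of $\cF$ is always $w/2+v^2/4$, we have $w_{n+1}=\tfrac12 w_n+\tfrac14 v_n^2$ for every $n$, whereas $v_{n+1}=2v_n$ if $(v_n,w_n)\in D^c$ and $v_{n+1}=0$ if $(v_n,w_n)\in D$. Hence, if $(v_n,w_n)\in D$ then $|w_n|<v_n^2$, so $v_{n+1}=0$ and $\phi_{n+1}=|w_{n+1}|\le\tfrac12|w_n|+\tfrac14 v_n^2<\tfrac34 v_n^2<\tfrac34\,\phi_n$; and if $(v_n,w_n)\in D^c$ then $\phi_{n+1}=4v_n^2+|w_{n+1}|\le\tfrac{17}{4}v_n^2+\tfrac12|w_n|$. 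So the energy contracts by a factor $\le 3/4$ whenever the orbit visits $D$, but it may grow while the orbit stays in $D^c$, because $v$ is doubled.

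Next I would record two structural facts. (i) Once $v_n=0$, the orbit remains on the line $\{v=0\}\subset D^c$, where $\cF(0,w)=(0,w/2)$; hence $v_m=0$ and $\phi_m=2^{-(m-n)}\phi_n$ for all $m\ge n$. (ii) If $v_0\ne0$, the orbit reaches $D$ in finitely many steps: as long as it stays in $D^c$ one has $v_n=2^n v_0$ and, solving the recursion for $w$, $w_n=2^{-n}w_0+\frac{v_0^2(8^n-1)}{7\cdot 2^{n+1}}$; since the last term lies in $[0,\tfrac{1}{14}v_n^2]$ while $2^{-n}w_0\to0$, for large $n$ one has $0<w_n<v_n^2$, i.e.\ $(v_n,w_n)\in D$. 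Thus, when $v_0\ne0$, there is a first time $k\ge0$ with $(v_k,w_k)\in D$, and by (i) the whole tail is determined: $\phi_n=2^{k+1-n}\phi_{k+1}$ with $\phi_{k+1}\le\tfrac34\phi_k$ for all $n>k$.

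It then remains to bound $\phi_n$ for $0\le n\le k$, and this is where the real work lies. Writing $\phi_n=4^n v_0^2+|w_n|\le\tfrac{15}{14}\,4^n v_0^2+2^{-n}|w_0|$, I would show that the seemingly exponential term $4^n v_0^2$ is in fact small compared with $\phi_0$: a long excursion in $D^c$ forces $v_0$ to be tiny. Indeed, for $k\ge1$ the point $(v_{k-1},w_{k-1})$ lies in $D^c$, so either $w_{k-1}=0$ — which, via the explicit formula above, pins $|w_0|$ to $v_0^2$ and $k$ — or $|w_{k-1}|\ge v_{k-1}^2=4^{k-1}v_0^2$; comparing the latter with $|w_{k-1}|\le 2^{1-k}|w_0|+\tfrac{1}{14}4^{k-1}v_0^2$ yields $4^k v_0^2\le C_0\,2^{-k}|w_0|\le C_0\,2^{-n}\phi_0$ for all $0\le n\le k$, with $C_0$ an absolute constant. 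Substituting back gives $\phi_n\le C\,2^{-n}\phi_0$ on the whole range $0\le n\le k$; together with the tail formula of the previous step and the degenerate cases $v_0=0$ and $k=0$ (plus a couple of boundary configurations treated directly), this gives $\phi_n\le C\,2^{-n}\phi_0$ for every $n\ge0$. Since $2^{-n}\le(3/4)^{n-1}$ for $n\ge1$, this is a bound of the form \eqref{2dimest}, the precise constant being obtained by carrying the inequalities above more tightly. The main obstacle is exactly this bookkeeping: converting the heuristic ``the growth phase is short because the orbit must stay out of $D$'' into the quantitative estimate $4^n v_0^2\lesssim 2^{-n}\phi_0$ for $0\le n\le k$.
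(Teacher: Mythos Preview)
Your argument is correct in outline and does yield a bound of the required shape, but by a more computational route than the paper's and without actually pinning down the constant. The paper shares your three-phase picture (a stretch in $D^c$, one step in $D$, then the axis $\{v=0\}$), but handles the growth phase far more directly: while $(v_n,w_n)\in D^c$ with $w_n\ne 0$ one has \emph{by definition of $D$} the inequality $v_n^2\le|w_n|$, and therefore
\[
|w_{n+1}|\,\le\,\tfrac12|w_n|+\tfrac14 v_n^2\,\le\,\tfrac34\,|w_n|\,,
\]
which gives at once $|w_n|\le(3/4)^n|w_0|$ and $v_n^2\le|w_n|\le(3/4)^n|w_0|$ throughout the first phase. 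There is no need to solve the $w$-recursion in closed form, nor to exploit the last-exit constraint $|w_{k-1}|\ge v_{k-1}^2$ to bound $4^k v_0^2$ a posteriori: the very condition that keeps the orbit in $D^c$ already forces $v_n^2$ to decay. Your explicit formula for $w_n$ and the resulting estimate $4^k v_0^2\lesssim 2^{-k}|w_0|$ are not wrong---indeed they deliver the sharper asymptotic rate $2^{-n}$---but the absolute constant they produce is well above $4$, and your closing claim that the precise constant follows by ``carrying the inequalities above more tightly'' is not substantiated by the method as written; for the stated bound at small $n$ you really want the paper's one-line $(3/4)$-contraction.
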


\begin{proof}
If $v_0 = 0$, then $v_n = 0$ and $w_n = 2^{-n}w_0$ for all $n \in \N$, 
hence \eqref{2dimest} obviously holds. Moreover, if $(v_0,w_0) \in D$, 
then $v_1 = 0$ and $w_1 = w_0/2 + v_0^2/4$, so that \eqref{2dimest} 
holds for $n \le 1$, and the subsequent values of $n$ follow as 
in the previous case. Similarly, if $w_0 = 0$ and $v_0 \neq 0$, then
$(v_1,w_1) = (2v_0,v_0^2/4) \in D$, and proceeding as above we 
conclude that \eqref{2dimest} holds for all $n \in \N$. So it 
remains to consider the case where $|w_0| \ge v_0^2 > 0$. 
If $|w_n| \ge v_n^2$ for all $n \in \N$, we have $|w_{n+1}| 
\le |w_n|/2 +  v_n^2/4 \le 3|w_n|/4$ for all $n$, hence
\begin{equation}\label{2dimbis}
   |w_n| \,\le\, \Bigl(\frac34\Bigr)^n |w_0|\,, \qquad
   v_n^2 \,\le\, |w_n| \,\le\, \Bigl(\frac34\Bigr)^n |w_0|\,, 
\end{equation}
and \eqref{2dimest} follows. In the converse case, let $N \ge 1$ be
the smallest integer for which $|w_N| < v_N^2$. Then the first
inequality in \eqref{2dimbis} holds for all $n \le N$, and the second
one for $n \le N-1$. Since $v_N^2 \le 4v_{N-1}^2 \le 4\cdot(3/4)^{N-1}|w_0|$, 
we deduce that \eqref{2dimest} holds for all $n \le N$. We also know 
that $|w_N| \ge 4^{-N}|w_0| > 0$, hence $(v_N,w_N) \in D$, and this 
implies that $v_n = 0$ for all $n \ge N+1$. As  
\[
  |w_{N+1}| \,\le\, \frac{|w_N|}{2} + \frac{v_N^2}{4} \,<\, 
  \frac{3v_N^2}{4} \,\le\, 4\Bigl(\frac34\Bigr)^N |w_0|\,,
\]
we conclude that estimate \eqref{2dimest} holds for $n = N+1$, 
hence also for all subsequent $n$. The proof is thus complete. 
\end{proof}

\begin{remark}\label{Liprem}
The map $\cF$ defined in \eqref{F2dim} is not continuous, but we
believe that the same stabilization phenomenon can occur for a map
that is H\"older continuous and smooth outside the origin. Lipschitz
regularity cannot be achieved, because in a finite-dimensional space a
Lipschitz map $\cF$ that is G\^ateaux differentiable at the origin
with a linear derivative $L=d\cF|_{u=0}$ is automatically Fr\'echet
differentiable \cite[Appendix A]{AH}, in which case nonlinear
stabilization is precluded by the Lyapunov instability theorem.
\end{remark}

%%%%%%%%%%%%%%%%%%%%%%%%%%%%%%%%%%%%%%%%%%

\section{The case of normal linear tangent maps}
\label{sec:normal}

In this final section, we assume that $B$ is a Hilbert space, and that
$L$ is a bounded linear operator in $B$ which is {\em normal}, in the
sense that $L L^* = L^* L$.  We consider a map $\cF : B \to B$ with
$\cF(0) = 0$ satisfying \eqref{remest3}, namely
\begin{equation}\label{remest4}
  |\cF(u) - Lu|_B \,\le\, \alpha(|u|_B)|u|_B\,, 
  \qquad\hbox{whenever}\quad |u|_B\le a\,,
\end{equation}
where $a > 0$ and $\alpha : [0,a] \to [0,+\infty)$ is a nondecreasing
function such that $\alpha(s) \to 0$ as $s \to 0$. This of course implies 
that $\cF$ is Fr\'echet differentiable at the origin with $L=d\cF|_{u=0}$. 
In this situation, the Rutman-Daleckii theorem quoted in the introduction 
can be sharpened as follows.

\begin{proposition}\label{nprop} If the function $\alpha$ satisfies the 
integrability condition \eqref{intalpha}, then the linear instability
assumption $r(L) > 1$ implies implies nonlinear exponential 
instability of system \eqref{de}, at the linear rate $\rho  = r(L)$. 
\end{proposition}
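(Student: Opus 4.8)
The plan is to exploit the normality of $L$ to split the dynamics into a strongly unstable part, living in the range of a spectral projection, and a complementary part where the propagator has controlled norm. Since $L$ is normal with $r(L)>1$, the spectral theorem gives a Borel functional calculus; fix $\rho_1 \in (1, r(L))$ and let $P$ be the spectral projection of $L$ associated with the set $\{|z| \ge \rho_1\}$, which is an orthogonal projection commuting with $L$. Write $B = B_+ \oplus B_-$ with $B_+ = PB$, $B_- = (\Id - P)B$, and correspondingly $L = L_+ \oplus L_-$. On $B_+$ one has $|L_+^{-1}|_{B_+\to B_+} \le \rho_1^{-1}$ (so $L_+$ is expanding: $|L_+ v|_B \ge \rho_1 |v|_B$), and on $B_-$ one has $|L_-^n|_{B_-\to B_-} \le \rho_1^n$ by normality, in fact $|L_-|_{B_-\to B_-} = r(L_-) \le \rho_1$. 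I would actually want $P$ chosen so that $B_+$ captures the full instability at rate $r(L)$: since for any $\rho < r(L)$ the set $\{|z| > \rho\}$ has nonempty spectrum, one can take $\rho_1$ as close to $r(L)$ as desired, but the cleanest route is to run the argument with a general $\rho_1 < r(L)$ producing exponential instability at rate $\rho_1$, and then note that the rate can be pushed to $\rho = r(L)$ by a limiting/refined argument — or, better, to carry a small parameter through and optimize at the end.

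The core is a cone-invariance (or Perron-type) argument for the nonlinear map. Decompose $u_n = v_n + w_n$ with $v_n = Pu_n$, $w_n = (\Id - P)u_n$, and from \eqref{remest4} write $\cF(u) = Lu + R(u)$ with $|R(u)|_B \le \alpha(|u|_B)|u|_B$. The idea is to look for solutions that remain in the cone $\cC_\kappa = \{u : |w|_B \le \kappa |v|_B\}$ for a suitable $\kappa > 0$, started from data on the unstable subspace $B_+$ (or very close to it). Along such solutions, projecting the recursion gives $v_{n+1} = L_+ v_n + P R(u_n)$ and $w_{n+1} = L_- w_n + (\Id - P)R(u_n)$; using $|L_+ v_n|_B \ge \rho_1 |v_n|_B$, $|v_n|_B \le |u_n|_B \le (1+\kappa)|v_n|_B$, and $|R(u_n)|_B \le \alpha((1+\kappa)|v_n|_B)|u_n|_B$, one gets
\[
  |v_{n+1}|_B \,\ge\, \bigl(\rho_1 - (1+\kappa)\alpha(|u_n|_B)\bigr)|v_n|_B,
\]
and a complementary upper bound showing $|w_{n+1}|_B \le \kappa|v_{n+1}|_B$ provided $\alpha(|u_n|_B)$ is small enough relative to $\rho_1 - \rho_2$ (where $\rho_2 \in (\max(1, r(L_-)), \rho_1)$). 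The subtlety is that $\alpha(|u_n|_B)$ is not uniformly small — it is only small when $|u_n|_B$ is small — but along an escaping trajectory $|u_n|_B$ grows, so $\alpha(|u_n|_B)$ \emph{increases}. This is exactly where condition \eqref{intalpha} enters.

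The integrability condition \eqref{intalpha} is the heart of the matter, and handling it is the main obstacle. The point is that even though $\alpha(|u_n|_B)$ is not small for all $n$, the trajectory spends only boundedly many steps at each scale before doubling, so the \emph{accumulated} loss $\sum_n \ln\bigl(1 - c\,\alpha(|u_n|_B)/\rho_1\bigr)$ telescopes into something like $\sum_k \alpha(2^k |u_0|_B)$ (summed over dyadic scales up to the exit scale $\e$), and $\int_0^a \alpha(s)\,\tfrac{\dd s}{s} < \infty$ is precisely what makes $\sum_k \alpha(2^{-k}a) < \infty$, so this sum is bounded by a constant independent of $|u_0|_B$. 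Quantitatively, I would fix $\e > 0$ small with $\int_0^\e \alpha(s)\tfrac{\dd s}{s}$ less than a prescribed small threshold, choose $\kappa$ and the geometry accordingly, and then for arbitrarily small $\delta$ take $u_0 \in B_+$ with $|u_0|_B = \delta$ (the cone condition $w_0 = 0 \le \kappa|v_0|_B$ is trivially satisfied). A discrete Gronwall estimate with the cumulative factor $\prod_{k<n}(1 - c\,\alpha(|u_k|_B)/\rho_1) \ge c_0 > 0$ then yields $|u_n|_B \ge |v_n|_B \ge c_0 \rho_1^n |u_0|_B$ for as long as $|u_n|_B \le \e$, which is the definition of exponential instability at rate $\rho_1$ with $C = c_0$ and the $\e$ just fixed. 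To upgrade the rate to $\rho = r(L)$: run the argument with $\rho_1 = r(L) - \eta$ for each $\eta > 0$; since the needed smallness of $\int_0^\e \alpha$ depends only on the gap $\rho_1 - \rho_2$ and one may take $\rho_2$ fixed just above $\max(1, r(L_-))$ while $\rho_1 \to r(L)$, the constant $C$ degrades but stays positive, and a short separate argument (or direct inspection of the Gronwall product, noting $\prod(1 - c\alpha(|u_k|_B)/\rho_1) \to 1$ as $\e\to 0$ uniformly) shows one in fact obtains the sharp rate $\rho = r(L)$ itself. I expect that last upgrade, and the careful bookkeeping of which constants depend on which, to be the fussiest part; the conceptual content is entirely in the dyadic-summation use of \eqref{intalpha}.
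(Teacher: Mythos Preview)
Your approach via spectral projections and cone invariance is sound in spirit and is in fact the paper's \emph{alternative} proof (Proposition~\ref{prodprop} and the paragraph following it). You also correctly identify the mechanism by which the integrability condition~\eqref{intalpha} controls the cumulative Gronwall loss. However, two genuine gaps remain in your implementation.

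First, the fixed cone $\{|w|_B \le \kappa|v|_B\}$ cannot be preserved when the expansion rate of $L_+$ and the bound $|L_-|_{B_-\to B_-}$ are both equal to $\rho_1$, as happens whenever the spectrum of $L$ has no gap at $|z|=\rho_1$ (precisely the situation the proposition targets). Writing out the invariance condition $|w_{n+1}|_B \le \kappa|v_{n+1}|_B$ with your own bounds gives $\rho_1\kappa|v_n|_B + (1{+}\kappa)\alpha|v_n|_B \le \kappa\bigl(\rho_1 - (1{+}\kappa)\alpha\bigr)|v_n|_B$, which simplifies to $(1{+}\kappa)^2\alpha \le 0$ and fails for any $\alpha>0$. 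Your appeal to a $\rho_2 \in (r(L_-),\rho_1)$ does not help, since in the no-gap case $r(L_-)=\rho_1$. The paper's remedy is a \emph{variable} cone $\{|w|_B \le \beta(|v|_B)\}$ with $\beta(r)/r \to 0$; solving the resulting functional inequality~\eqref{hineq} for $\beta$ is exactly where~\eqref{intalpha} enters, via $\beta(r)=Cr\int_0^r \alpha(s)s^{-1}\dd s$.

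Second, the upgrade from rate $\rho_1<r(L)$ to the sharp rate $r(L)$ is not a bookkeeping detail but the crux. The spectral-projection bound $|L_+v|_B\ge \rho_1|v|_B$ is the best available, so the cone argument gives at most rate $\rho_1$. Your limiting scheme ``$\rho_1\to r(L)$ with $\rho_2$ fixed'' breaks down because $r(L_-)\to r(L)$ as $\rho_1\to r(L)$, collapsing the gap you need. Indeed the paper itself, applying Proposition~\ref{prodprop} in this setting, only concludes instability ``at any rate $\rho<r(L)$''.

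The paper's primary proof of Proposition~\ref{nprop} sidesteps both obstacles. Normality is invoked solely for the bound $|L|_{B\to B}=r(L)$ (see Remark~\ref{addrem}), which feeds a Duhamel upper bound $|u_k|_B\le 2r(L)^k|u_0|_B$ established inductively via~\eqref{intalpha}. The sharp rate comes from choosing the initial datum to be an \emph{approximate eigenvector} $v_\nu$ for some $\lambda\in\sigma(L)$ with $|\lambda|=r(L)$, with the tolerance $\nu$ tuned to the exit time~$N$; then $|L^n u_0|_B \ge \tfrac34 r(L)^n|u_0|_B$ for $n\le N$, and the Duhamel lower bound yields $|u_n|_B\ge \tfrac12 r(L)^n|u_0|_B$ directly---no limiting procedure required.
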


\begin{proof} We adapt the proof given in \cite[Theorem 5.1.5]{He}. 
Given $u_0 \in B$, let $(u_n)$ be the sequence defined by \eqref{de}. 
By straightforward induction, there holds
\begin{equation} \label{de:eq}
  u_n \,=\, L^n u_0 ~+ \sum_{0 \le k \le n-1} L^{n-k-1} 
  \big(\cF(u_k)- Lu_k\big)\,, \qquad n \,\ge\, 1\,,
\end{equation}
where $L^0$ is the identity map. Since the operator $L$ is normal, 
the norm $|L|_{B \to B}$ is equal to the spectral radius $r(L)$, 
see \cite[Section V.2.1]{Ka1}, which gives the  bound
\begin{equation} \label{normal:eq}
  |L u|_B \,\le\, r(L) |u|_B\,, \qquad \hbox{for all }u \in B\,.
\end{equation}
From \eqref{de:eq}, \eqref{normal:eq}, and assumption
\eqref{remest4}, we deduce the upper bound
\begin{equation} \label{de:bd}
  |u_n|_B \,\le\, r(L)^n |u_0|_B ~+ \sum_{0 \le k \le n-1} r(L)^{n-k-1} 
  \a(|u_k|_B) |u_k|_B\,,
\end{equation}
which holds provided $|u_k|_B \le a$ for $0 \le k \le n-1$.
 
From now on, we assume that $r(L) > 1$ and that the function 
$\alpha$ satisfies the integrability assumption \eqref{intalpha}. 
We fix $\eta \in (0,a]$ small enough so that 
\begin{equation} \label{cond:eta}
 \frac{2}{r(L) \ln r(L)} \int_0^{\eta} \frac{\a(s)}{s} \dd s 
 \,\le\, \frac14\,.
\end{equation}
Given any sufficiently small $\delta > 0$, we denote by $N = 
N(\delta)$ the unique positive integer such that
\begin{equation} \label{def:N}
  2 r(L)^N \delta \,\le\, \eta \,<\, 2 r(L)^{N+1} \delta\,.
\end{equation} 

In a first step, given any initial data $u_0 \in B$ with 
$|u_0|_B \le \delta$, we prove by induction that 
\begin{equation} \label{ind:henry}
 |u_k|_B \,\le\, 2 r(L)^k |u_0|_B\,, \quad 0 \le k \le N\,. 
\end{equation}
Indeed the inequality in \eqref{ind:henry} obviously holds 
for $k = 0$. Given $1 \le n \le N$, assume that the inequality 
holds for $0 \le k \le n-1$. Then $|u_k|_B \le 2 r(L)^N \delta 
\le \eta$ for $k \le n-1$, and using in addition \eqref{de:bd} 
we deduce that
\begin{equation} \label{for:ind}
 |u_n|_B \,\le\, r(L)^{n} \Big(1 + 2 r(L)^{-1} \sum_{0 \le k \le n-1} 
 \a\big(2 r(L)^{k} |u_0|_B\big)\Big) |u_0|_B\,.
\end{equation}
As the function $\alpha$ is nondecreasing by assumption, 
we can bound
\begin{align}\nonumber
  \sum_{0 \le k \le n-1} \a\big(2 r(L)^{k} |u_0|_B\big) \,\le\, 
  \int_{0}^{n} \a(2 r(L)^x |u_0|_B)\dd x \,&=\, \frac{1}{\ln r(L)} 
  \int_{2 |u_0|_B}^{2 r(L)^{n} |u_0|_B} \frac{\a(s)}{s}\dd s \\ 
  \,&\le\, \frac{1}{\ln r(L)} \int_0^\eta \frac{\a(s)}{s}\dd s
  \,\le\, \frac{r(L)}{8}\,, \label{up:alpha} 
\end{align}
where we have used \eqref{cond:eta} and \eqref{def:N} in the last
inequalities. Combining \eqref{for:ind} and \eqref{up:alpha}, we
obtain $|u_n|_B \le (5/4) r(L)^n |u_0|_B \le 2 r(L)^n |u_0|_B$, which
completes the inductive proof of \eqref{ind:henry}. 

In a second step, we prove nonlinear exponential instability of system
\eqref{de} by considering specific initial data. As the spectrum of
$L$ is a compact subset of the complex plane, there exists $\lambda
\in \sigma(L)$ such that $|\lambda| = r(L)$.  Moreover $\lambda$ is an
approximate eigenvalue of $L$ in the sense that, for any $\nu > 0$,
there exists $v_{\nu} \in B$ such that
\begin{equation}\label{approx}
 |v_{\nu}|_B = 1\,, \qquad \hbox{and}\quad
 |(L -\lambda) v_{\nu}|_B \,\le\, \nu\,.
\end{equation}
Indeed, take any $\mu \in \C$ with $|\mu| > r(L)$ and $|\mu - \lambda|
< \nu/2$. The norm of the resolvent operator $(L-\mu)^{-1}$ is equal
to $\mathrm{dist}(\mu,\sigma(L))^{-1} \ge |\mu -\lambda|^{-1} >
2/\nu$, hence there exists $v_{\nu} \in B$ with $|v_{\nu}|_B = 1$ such
that $|(L -\mu) v_{\nu}|_B \,\le\, \nu/2$, and \eqref{approx}
follows. Using the factorization
\[
  L^n -\lambda^n \,=\, (L^{n-1} + L^{n-2}\lambda + \dots + L 
  \lambda^{n-2} + \lambda^{n-1})(L-\lambda)\,,
\]
together with the bound \eqref{normal:eq} and the fact that $|\lambda| = r(L)$, 
we easily obtain
\begin{equation} \label{approx:ind}
  |(L^n - \lambda^n) v_\nu|_B \,\le\, \nu n r(L)^{n-1}\,,
  \qquad \hbox{for all } n \,\ge\, 1\,.
\end{equation}
Now, given $\delta > 0$ arbitrarily small, we define $N \in \N$ 
as in \eqref{def:N} and take $v_\nu \in B$ such that 
\eqref{approx} holds with $\nu = r(L)/(4N)$. We choose as 
initial data $u_0 = \delta v_\nu$, so that $|u_0|_B = \delta$. 
In view of \eqref{approx:ind}, we have
$$ %\begin{equation} \label{low:v}
  | L^n u_0|_B \,\ge\, r(L)^n \Bigl(1 - \frac{\nu n}{r(L)}\Bigr)
  |u_0|_B \,\ge\, \frac34 r(L)^n |u_0|_B\,, \qquad 
  1 \le n \le N\,.
$$ %\end{equation} 
Thus, using \eqref{de:eq} as well as the estimates \eqref{ind:henry} 
and \eqref{up:alpha}, we obtain the lower bound
\begin{equation} \label{low0}
 |u_n|_B \,\ge\, r(L)^n\Big(\frac34 - 2 r(L)^{-1} \sum_{0 \le k \le n-1} 
 \a\big(2 r(L)^k |u_0|_B\big)\Big)|u_0|_B \,\ge\, \frac12 
 r(L)^n |u_0|_B\,,
\end{equation}
for $1 \le n \le N$. In particular, in view of \eqref{def:N}, there
holds $|u_N|_B \ge \epsilon := \eta/(4r(L))$, and this shows that
system \eqref{de} is exponentially unstable at the linear rate $\rho
= r(L)$ in the sense of Definition~\ref{def:nins}.
\end{proof}

\begin{remark}\label{addrem}
We develop below a more geometric approach to nonlinear instability, 
based on the construction of invariant regions, which gives in particular 
an alternative proof of Proposition~\ref{nprop}. The above proof 
is more straightforward, and it is interesting to note that the 
normality of the linearized operator $L$ is only used to establish
estimate \eqref{normal:eq}, which is a first step in the derivation 
of the upper bound \eqref{de:bd}.
\end{remark}

The following example shows that Proposition~\ref{nprop} is sharp
in terms of preserving the linear rate.

\begin{proposition}\label{ex:scal} For the scalar map $\cF: \R \to \R$ 
defined by 
\begin{equation}\label{scal}
  \cF(u) \,=\, r(L) u - \a(|u|) u\,,
\end{equation}
where $\a: \R_+ \to \R_+$ is nondecreasing on the interval $[0,a]$ 
with $\a(s) \to 0$ as $s \to 0$, nonlinear exponential instability 
of system \eqref{de} at the linear rate $r(L) > 1$ implies 
the integrability condition \eqref{intalpha}. 
\end{proposition}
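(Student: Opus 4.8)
The plan is to prove the contrapositive: if $\int_0^a \alpha(s)/s\,\dd s = \infty$, then system \eqref{de} with $\cF$ as in \eqref{scal} is \emph{not} exponentially unstable at the linear rate $\rho := r(L) > 1$. Since $\cF(-u) = -\cF(u)$ it is enough to rule out, for \emph{every} pair $(C,\epsilon)$, the existence of the orbits required in Definition~\ref{def:nins} with positive initial data. Note $u_{n+1} = (\rho - \alpha(u_n))\,u_n$, so a positive orbit stays positive as long as $\alpha(u_n) < \rho$. First I would fix $\epsilon \in (0,a]$ small enough that $\alpha(\epsilon) < \rho - 1$ (possible because $\alpha(s)\to 0$), and restrict attention to $\delta < \epsilon/\rho$, so that every admissible $u_0 \in (0,\delta]$ lies in $(0,\epsilon/\rho)$. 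For such $u_0$ the sequence $(u_n)$ is strictly increasing while it stays in $(0,\epsilon]$, with ratios $u_{n+1}/u_n = \rho - \alpha(u_n) \in (1,\rho]$; since this ratio is bounded below by $\rho - \alpha(\epsilon) > 1$, the orbit leaves $(0,\epsilon]$ at a finite first time $n^\ast = n^\ast(u_0) \ge 1$, and from $\epsilon < u_{n^\ast} \le \rho^{\,n^\ast} u_0$ one sees $n^\ast \to \infty$ as $u_0 \to 0$.

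Next I would use the telescoping identity
\[
  \frac{u_{n^\ast - 1}}{\rho^{\,n^\ast-1}\,u_0} \,=\, \prod_{k=0}^{n^\ast-2}\Bigl(1 - \frac{\alpha(u_k)}{\rho}\Bigr)\,.
\]
The index $n = n^\ast - 1$ satisfies $\max(u_0,\dots,u_{n^\ast-1}) = u_{n^\ast-1} \le \epsilon$, so exponential instability at rate $\rho$ with constant $C$ would force the right-hand side above to be $\ge C$ for every admissible $u_0$; our goal is to contradict this for $\delta$ (hence $u_0$) small. Taking logarithms and using $\ln(1-x)\le -x$ on $[0,1)$ (here $\alpha(u_k)/\rho \le \alpha(\epsilon)/\rho < 1$) gives
\[
  \ln \prod_{k=0}^{n^\ast-2}\Bigl(1 - \frac{\alpha(u_k)}{\rho}\Bigr) \,\le\, -\frac1\rho \sum_{k=0}^{n^\ast-2}\alpha(u_k)\,,
\]
so everything reduces to showing that $\sum_{k=0}^{n^\ast-2}\alpha(u_k) \to \infty$ as $u_0 \to 0$.

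The heart of the matter, and the step I expect to need the most care, is comparing this discrete sum to the divergent integral. Since $\alpha$ is nondecreasing, $0 < u_k < u_{k+1} \le \epsilon$, and $u_{k+1}/u_k \le \rho$, we have $\int_{u_k}^{u_{k+1}} \alpha(s)/s\,\dd s \le \alpha(u_{k+1}) \ln(u_{k+1}/u_k) \le \alpha(u_{k+1})\ln\rho$. Summing over $0 \le k \le n^\ast - 2$, and using that $u_{n^\ast-1} \ge \epsilon/\rho$ (because $\epsilon < u_{n^\ast} \le \rho\,u_{n^\ast-1}$) together with $u_0 < \epsilon/\rho$, we obtain
\[
  \int_{u_0}^{\epsilon/\rho} \frac{\alpha(s)}{s}\,\dd s \,\le\, \ln\rho \sum_{k=1}^{n^\ast-1}\alpha(u_k) \,\le\, \ln\rho\,\Bigl(\alpha(\epsilon) + \sum_{k=0}^{n^\ast-2}\alpha(u_k)\Bigr)\,.
\]
Because $\int_0^{\epsilon/\rho}\alpha(s)/s\,\dd s = \infty$ (it differs from $\int_0^a$ by the finite quantity $\int_{\epsilon/\rho}^a$), the left-hand side tends to $+\infty$ as $u_0 \to 0$, hence so does $\sum_{k=0}^{n^\ast-2}\alpha(u_k)$, and this holds uniformly over $u_0 \in (0,\delta]$ since the lower bound only improves when $u_0$ decreases. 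Combining with the logarithmic estimate, the product in the telescoping identity tends to $0$ as $\delta \to 0$, contradicting the lower bound $\ge C$ that instability would impose. Hence \eqref{de} is not exponentially unstable at rate $r(L)$, which is precisely the contrapositive of the statement. The remaining points to verify are routine: that the orbit indeed stays in $(0,\epsilon] \subset (0,a]$ where $\alpha$ is defined and controlled, that $n^\ast$ is finite, and that working with $u_0 \le \delta$ rather than $u_0 = \delta$ causes no loss.
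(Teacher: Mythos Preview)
Your proof is correct and follows essentially the same route as the paper's, recast in contrapositive form: both arguments use the product representation $u_n = \rho^n u_0 \prod_k(1-\alpha(u_k)/\rho)$, the elementary inequality $-\ln(1-x)\ge x$, and a sum-to-integral comparison based on the monotonicity of $\alpha$ together with the fact that consecutive ratios $u_{k+1}/u_k$ lie in $(1,\rho]$. One small point worth making explicit is the reduction to small $\epsilon$: since a witness index $n$ with $\max_{k\le n} u_k \le \epsilon'$ also works for any $\epsilon \ge \epsilon'$, ruling out the instability bound for $\epsilon$ small enough that $\alpha(\epsilon)<\rho-1$ (and $\epsilon\le a$) automatically rules it out for all $\epsilon$; the paper makes the same reduction with a brief ``without loss of generality''.
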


\begin{proof} 
Assume nonlinear exponential instability of system \eqref{de} at the
linear rate: there exists $\e > 0$ and $C > 0$ such that, for
arbitrarily small initial data $u_0 \in \R$, the solution of
\eqref{de} satisfies $|u_n| \ge C r(L)^n |u_0|$ as long as $|u_n| \le
\e$. Without loss of generality, we suppose that that $\e \le a$ and
that $\a(s) \le r(L)/2$ for all $s \in [0,\e]$. Let $(u_n)$ be an
unstable sequence as described above, and assume for definiteness that
$u_0 > 0$. Let $N$ be the largest nonnegative integer such that
\begin{equation}\label{N} 
  C r(L)^n u_0 \,\le\,  u_n \,\le\,  \e\,, 
  \qquad \hbox{for} \quad 0 \le n \le N\,.
\end{equation}
By definition of $\cF$ in \eqref{scal}, there holds
\begin{equation}\label{unrep}
  u_n \,=\, r(L)^n u_0 \prod_{0 \le k \le n-1} 
  \Bigl(1 - \frac{\a(u_k)}{r(L)}\Bigr)\,, \qquad n \in \N\,,
\end{equation}
so that $u_n \le r(L)^n u_0$ for $0 \le n \le N+1$. It follows that 
\[
  \ln u_n \,=\, \ln u_0 + n \ln r(L) ~+ \sum_{0 \le k \le n-1} 
  \ln\Bigl(1 - \frac{\a(u_k)}{r(L)}\Bigr)\,, \qquad 1 \le n \le N\,,
\]
and since $u_n \ge C r(L)^n u_0$ this implies
\begin{equation}\label{sumsum}
  \frac{1}{r(L)}\sum_{0 \le k \le n-1} \a(u_k) \,\le\, 
  -\sum_{0 \le k \le n-1} \ln\Bigl(1 - \frac{\a(u_k)}{r(L)}\Bigr) 
  \,\le\, - \ln C\,, \qquad 1 \le n \le N\,,
\end{equation}
where the first inequality results from the fact that $x \le 
-\ln (1 - x)$ for $x \in [0,1)$. Using the monotonicity 
assumption on $\a$ and the lower bound $u_k \ge C r(L)^k u_0$, 
we deduce from \eqref{sumsum} that
\[
  \int_{-1}^{N-1}\a\big(C r(L)^x u_0\big) \dd x ~\le\, 
  \sum_{0 \le k \le N - 1} \a\big( C r(L)^k u_0 \big) \,\le\, 
  - r(L) \ln C\,,
\]
hence also
\[
  \int_{C r(L)^{-1}u_0}^{C r(L)^{N-1} u_0} \frac{\a(s)}{s} \dd s \,\le\, 
  - r(L) \ln r(L) \ln C\,.
\]
Our choice of $N$ implies that $r(L)^{N-1} u_0 \ge \e/r(L)^2$, 
whereas the lower bound $C r(L)^{-1}u_0$ in the above integral 
can be taken arbitrarily small. We conclude that 
\[
  \int_0^{C\e/r(L)^2} \frac{\a(s)}{s} \dd s \,\le\,  - r(L) 
  \ln r(L) \ln C\,,
\]
which of course implies \eqref{intalpha}.
\end{proof}

\begin{remark}\label{slower}
If $\alpha(s)$ converges arbitrarily slowly to zero as $s \to 0$, 
it is clear from the representation \eqref{unrep} that the 
origin $u = 0$ is exponentially unstable at any rate $\rho < r(L)$, 
but Proposition~\ref{ex:scal} shows that one can take $\rho = r(L)$ 
only if the integrability condition \eqref{intalpha} is satisfied. 
In the particular case where $\alpha(s) = |\ln s|^{-\gamma}$ near
$s = 0$, we thus have exponential instability at the linear 
rate $r(L)$ if $\gamma > 1$, and exponential instability 
at any rate $\rho < r(L)$ if $0  < \gamma \le 1$. In the limiting 
case $\gamma = 1$, a more detailed analysis shows that solutions 
of \eqref{de} with small initial data $u_0 > 0$ satisfy a lower 
bound of the form 
\[
  \frac{u_n}{|\ln u_n|^\sigma} \,\ge\, r(L)^n \frac{u_0}{
  |\ln u_0|^\sigma}\,,
\]
for some $\sigma > 0$, as long as $u_n$ remains sufficiently small. 
\end{remark}

\bigskip
In the rest of this section, we establish a general nonlinear
instability result which encompasses the particular situation
considered in Proposition~\ref{nprop}. Let $B_1$, $B_2$ be 
two Banach spaces, with $B_1 \neq \{0\}$. We consider the 
following discrete dynamical system in the product 
space $B_1 \times B_2$\:
\begin{equation}\label{prodsys}
  \left\{\begin{array}{l}
  \hspace{-0.2mm}v_{n+1} \,=\, \cF_1(v_n,w_n) \,:=\,
  L_1 v_n + {\mathcal N}_1(v_n,w_n)\,, \\
  \hspace{-1mm}w_{n+1} \,=\, \cF_2(v_n,w_n) \,:=\, 
  L_2 w_n + {\mathcal N}_2(v_n,w_n)\,,
  \end{array}\right.
\end{equation}
where $L_1, L_2$ are bounded linear operators in $B_1, B_2$, 
respectively, satisfying the following estimates
\begin{equation}\label{L1L2bounds}
  |L_1 v| \,\ge\, \rho |v|\,, \quad 
  |L_2 w| \,\le\, \rho |w|\,, \quad 
  \hbox{for all } (v,w) \in B_1 \times B_2\,, \quad \hbox{ where } \rho > 1.
\end{equation}
We assume that the nonlinear maps ${\mathcal N}_1 : B_1 \times B_2 \to B_1$ 
and ${\mathcal N}_2 : B_1 \times B_2 \to B_2$ vanish at the origin, and 
for any $r > 0$ we denote
\begin{equation}\label{alphadef}
  \alpha(r) \,=\, \sup\left\{\frac{|{\mathcal N}_1(v,w)| + 
  |{\mathcal N}_2(v,w)|}{|v|} ~\Big|~ 0 < |v| \le r\,,~ |w| \le 
  |v| \right\}\,.
\end{equation}
The nondecreasing function $\alpha : \R_+ \to \R_+$ measures the size
of the nonlinearity ${\mathcal N} = ({\mathcal N}_1,{\mathcal N}_2)$
only inside the truncated cone $\{(v,w)\,|\, 0 < |v| \le r, \, |w| \le
|v|\}$. Therefore, assuming that $\alpha(r) \to 0$ as $r \to 0$ does
not necessarily imply that the map $\cF = (\cF_1,\cF_2)$ is Fr\'echet
differentiable at the origin, with derivative $L = (L_1,L_2)$. 
Nevertheless, the following result shows that, if $\alpha(s)/s$ is
integrable at the origin, the linear exponential instability condition
\eqref{L1L2bounds} implies nonlinear instability of the origin for the
full system \eqref{prodsys}.

\begin{proposition}\label{prodprop}
Assume that \eqref{L1L2bounds} holds for some $\rho > 1$
and that the function $\alpha$ defined in \eqref{alphadef}
satisfies the integrability condition \eqref{intalpha} 
for some $a > 0$. Then the origin $(0,0)$ is an unstable 
equilibrium of \eqref{prodsys}. 
\end{proposition}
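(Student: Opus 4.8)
The strategy is to mimic the two-step argument of Proposition~\ref{nprop}, but now working inside the truncated cone $\cC = \{(v,w)\,|\,|w| \le |v|\}$ where the function $\alpha$ in \eqref{alphadef} controls the nonlinearity. First I would choose $\eta \in (0,a]$ small enough that an analogue of \eqref{cond:eta} holds, with $\rho$ in place of $r(L)$, say $\frac{2}{\rho \ln \rho}\int_0^\eta \frac{\alpha(s)}{s}\dd s \le \tfrac14$. Given small $\delta > 0$, pick the integer $N = N(\delta)$ via $2\rho^N\delta \le \eta < 2\rho^{N+1}\delta$ as in \eqref{def:N}. The idea is to select initial data $(v_0,w_0)$ with $|v_0|_{B_1} = \delta$ and $w_0 = 0$ (or more generally $|w_0|_{B_2} \le |v_0|_{B_1}$), so that the trajectory starts in the interior of $\cC$, and to show by induction that for $0 \le n \le N$ the solution stays in $\cC$ with $|v_n|_{B_1}$ growing like $\rho^n\delta$ up to a bounded factor, while $|w_n|_{B_2} \le |v_n|_{B_1}$.

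The inductive step has two parts. For the $v$-component, from the Duhamel-type formula $v_n = L_1^n v_0 + \sum_{k=0}^{n-1} L_1^{\,n-k-1}\cN_1(v_k,w_k)$ — here one must be slightly careful since $L_1$ need not be invertible, but the lower bound $|L_1 v| \ge \rho|v|$ in \eqref{L1L2bounds} still gives $|L_1^{\,n-k-1} x|_{B_1} \ge \rho^{\,n-k-1}|x|_{B_1}$, which is what is needed for the lower bound; for the matching upper bound on $|v_n|_{B_1}$ I would instead iterate the scalar recursion $|v_{n+1}|_{B_1} \le \rho|v_n|_{B_1} + \alpha(|v_n|_{B_1})(|v_n|_{B_1}+\text{lower order})$, exactly as in \eqref{de:bd}, using that $(v_k,w_k) \in \cC$ so $|\cN_1(v_k,w_k)| + |\cN_2(v_k,w_k)| \le \alpha(|v_k|_{B_1})|v_k|_{B_1}$. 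The same integral comparison as in \eqref{up:alpha}, namely $\sum_{k=0}^{n-1}\alpha(2\rho^k|v_0|) \le \frac{1}{\ln\rho}\int_0^\eta \frac{\alpha(s)}{s}\dd s$, then yields both $|v_n|_{B_1} \le 2\rho^n|v_0|_{B_1}$ and, from the lower-bound side, $|v_n|_{B_1} \ge \tfrac12\rho^n|v_0|_{B_1}$ for $0 \le n \le N$. For the $w$-component, I would use $|w_{n+1}|_{B_2} \le \rho|w_n|_{B_2} + \alpha(|v_n|_{B_1})|v_n|_{B_1}$ together with the just-established bounds on $|v_n|_{B_1}$; since $w_0 = 0$, summing gives $|w_{n+1}|_{B_2} \le \sum_{k=0}^n \rho^{n-k}\alpha(2\rho^k|v_0|)\,2\rho^k|v_0| \le 2\rho^n|v_0|\sum_{k=0}^n \alpha(2\rho^k|v_0|) \le \tfrac14 \rho^{n+1}|v_0| \le \tfrac12 \rho^{n+1}|v_0| \le |v_{n+1}|_{B_1}$, which closes the induction that the trajectory remains in $\cC$.

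Having maintained $|v_n|_{B_1} \ge \tfrac12\rho^n|v_0|_{B_1} = \tfrac12\rho^n\delta$ for $0 \le n \le N$, the choice of $N$ forces $|v_N|_{B_1} \ge \tfrac12\rho^N\delta \ge \eta/(4\rho)$, which is a fixed positive quantity independent of $\delta$; hence the origin is unstable in the sense of Definition~\ref{def:nins} with $\epsilon = \eta/(4\rho)$. The main obstacle I anticipate is bookkeeping the interplay between the two components so that the cone condition $|w_n|_{B_2} \le |v_n|_{B_1}$ is genuinely propagated: one needs the contribution of $\alpha$ to $|w_n|$ to stay strictly below the growing $|v_n|$, and this is exactly where the integrability \eqref{intalpha} (through the smallness \eqref{cond:eta}) is used a second time. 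A minor technical point is that $L_1$ may fail to be surjective, so one should phrase the $v$-recursion directly from \eqref{prodsys} rather than via an inverse of $L_1$; the bound \eqref{L1L2bounds} is all that is needed and is used only in the lower estimate for $|L_1^n v_0|_{B_1}$ and the upper estimate $|L_2^n w_0|_{B_2}\le\rho^n|w_0|_{B_2}$.
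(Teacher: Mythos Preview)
Your proposal contains a genuine gap. You write the scalar recursion $|v_{n+1}|_{B_1} \le \rho|v_n|_{B_1} + \alpha(|v_n|_{B_1})|v_n|_{B_1}$, but the hypotheses in \eqref{L1L2bounds} give only a \emph{lower} bound $|L_1 v| \ge \rho|v|$; there is no upper bound $|L_1 v| \le \rho|v|$ (and in the intended application at the end of Section~\ref{sec:normal}, $|L_1| = r(L) > \rho$). Without this, the claimed inequality $|v_n|_{B_1} \le 2\rho^n|v_0|_{B_1}$ fails, and with it the estimate $\sum_k \alpha(|v_k|) \le \sum_k \alpha(2\rho^k|v_0|)$ on which both your lower bound for $|v_n|$ and your verification of the cone condition $|w_n| \le |v_n|$ rest. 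The same issue arises in your use of the Duhamel formula: to bound $|v_n|$ from below you would need an \emph{upper} bound on $|L_1^{\,n-k-1}\cN_1(v_k,w_k)|$, not the lower bound you invoke.

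The paper's proof avoids any upper control on $|v_n|$ altogether. Instead of the fixed cone $|w| \le |v|$, it constructs a curved invariant region $D = \{0 < |v| \le r_0,\ |w| \le \beta(|v|)\}$ with $\beta(r) = Cr\int_0^r \alpha(s)/s \dd s$, chosen so that the functional inequality $\rho\beta(r) + r\alpha(r) \le \beta\bigl(\rho r - r\alpha(r)\bigr)$ holds. Because $\beta$ is nondecreasing, the one-step lower bound $|v_{n+1}| \ge (\rho - \alpha(|v_n|))|v_n|$ alone suffices to propagate $|w_{n+1}| \le \beta(|v_{n+1}|)$. Integrability \eqref{intalpha} is exactly what makes such a $\beta$ exist.

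Your overall strategy can in fact be salvaged, but not in the way you describe. Once the one-step lower bound gives $|v_{k+1}| \ge \tilde\rho\,|v_k|$ with $\tilde\rho = \rho - \alpha(\eta) > 1$ (valid while $(v_k,w_k) \in \cC$ and $|v_k| \le \eta$), the sequence $|v_k|$ is increasing geometrically, and the monotonicity of $\alpha$ then yields
\[
  \sum_{k=0}^{n-1} \alpha(|v_k|) \,\le\, \frac{1}{\ln\tilde\rho}\sum_{k=0}^{n-1}\alpha(|v_k|)\ln\frac{|v_{k+1}|}{|v_k|} \,\le\, \frac{1}{\ln\tilde\rho}\int_{|v_0|}^{|v_n|}\frac{\alpha(s)}{s}\dd s \,\le\, \frac{1}{\ln\tilde\rho}\int_0^{\eta}\frac{\alpha(s)}{s}\dd s\,,
\]
without ever using an upper bound on $|v_k|$. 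With this estimate in hand, a simultaneous induction on the ratio $|w_n|/|v_n|$ and on $|v_n| \le \eta$ closes. This is closer in spirit to the paper's argument than to your Duhamel scheme, but it still differs in that the paper encodes the entire induction into the single functional inequality for $\beta$.
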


\begin{proof}
The idea is to construct an invariant region $D$ in a careful
way, depending on the function $\alpha$ which measures the size of the
nonlinearity. Let $r_0 > 0$ and $\beta : [0,r_0] \to [0,\infty)$ be a 
nondecreasing continuous function satisfying $\beta(r) \le r$ for 
$0 \le r \le r_0$. We denote
\begin{equation}\label{def:regD}
  D \,=\, \Bigl\{(v,w) \in B_1 \times B_2 \,\Big|\, 0 < |v| 
  \le r_0\,, ~|w| \le \beta(|v|)\Bigr\}\,.
\end{equation}
If $(v_n,w_n) \in D$, then
\begin{align} \label{low:v}
  |v_{n+1}| \,&\ge\,  |L_1v_n| - |{\mathcal N}_1(v_n,w_n)| \,\ge\, (\rho - 
  \alpha(|v_n|))|v_n|~, \\ \label{up:w}
  |w_{n+1}| \,&\le\, |L_2w_n| + |{\mathcal N}_2(v_n,w_n)| \,\le\, \rho \beta(|v_n|)
  + |v_n| \alpha(|v_n|)~.
\end{align}
To ensure that $|w_{n+1}| \le \beta(|v_{n+1}|)$ if $|v_{n+1}| \le r_0$, 
we impose the functional inequality
\begin{equation}\label{hineq}
  \rho \beta(r) + r\alpha(r) \,\le\, \beta(\rho r - r \alpha(r))~, \qquad 
  0 \le r \le r_0~.
\end{equation}
It is not clear a priori that \eqref{hineq} has any solution $\beta$
that is continuous and nondecreasing, but we shall see that the 
integrability condition \eqref{intalpha} is a necessary and 
sufficient condition for the solvability of \eqref{hineq}. 
For the moment we just observe that, since $\alpha$ a nondecreasing 
function, the hypothesis \eqref{intalpha} implies that $\alpha(r) \to 0$ 
as $r \to 0$. Thus, taking $r_0$ small enough, we can suppose that 
$\rho - \alpha(r_0) > 1$. 

We now look for a solution of \eqref{hineq} in the form
\[
  \beta(r) \,=\, Cr \int_0^r \frac{\alpha(s)}{s}\dd s~, \qquad 
  0 \le r \le r_0~,
\]
where $C > (\rho \ln(\rho))^{-1}$. For $r \le r_0$ we then have
\begin{align*}
  \beta(\rho r - r \alpha(r)) - \rho \beta(r) \,&=\, 
   C(\rho r - r \alpha(r))\int_0^{\rho r - r \alpha(r)} \frac{\alpha(s)}{s}
  \dd s - C\rho r  \int_0^r \frac{\alpha(s)}{s}\dd s \\
  \,&=\, C\rho r \int_r^{\rho r - r \alpha(r)} \frac{\alpha(s)}{s}\dd s
   -  Cr \alpha(r)\int_0^{\rho r - r \alpha(r)} \frac{\alpha(s)}{s}\dd s \\
  \,&\ge\, Cr\alpha(r)\Bigl(\rho\ln(\rho-\alpha(r)) - \int_0^{\rho r 
    - r \alpha(r)} \frac{\alpha(s)}{s}\dd s\Bigr)~,
\end{align*}
where in the last line the monotonicity of $\alpha$ was used. Since
the last integral converges to zero as $r \to 0$, and since $C \rho
\ln(\rho) > 1$, the right-hand side is larger than
$r\alpha(r)$ if $r$ is sufficiently small. Thus, if $r_0 > 0$ is small
enough, then $\beta(r) \le r$ for $0 \le r \le r_0$ and \eqref{hineq} is
satisfied. 

With $r_0$ and $\beta$ as above, consider the solution $(v_n, w_n)$ of
\eqref{prodsys} originating from arbitrarily small initial data $(v_0,
w_0) \in D$. As long as $|v_n| \le r_0$, the solution $(v_n,w_n)$
remains in the region $D$ defined by \eqref{def:regD}, as can be seen
using the bounds \eqref{low:v}, \eqref{up:w}, the functional
inequality \eqref{hineq}, and a straightforward induction. In
that region, the lower bound \eqref{low:v} implies that $|v_n| \ge
(\rho - \a(r_0))^n |v_0|$, which proves that the origin is
exponentially unstable at rate $\rho - \a(r_0) > 1$.
\end{proof}

\begin{remark}
Assume that we are given a solution $\beta$ of \eqref{hineq}. Since 
$\beta$ is nondecreasing, we have in particular $\rho \beta(r) + r\alpha(r) \le 
\beta(\rho r)$ for $0 \le r \le r_0$. Thus, for $\epsilon > 0$ small 
enough, 
\[
  \int_\epsilon^{r_0} \frac{\alpha(r)}{r}\dd r \,\le\, 
  \int_\epsilon^{r_0} \frac{\beta(\rho r)}{r^2}\dd r - \rho\int_\epsilon^{r_0} 
  \frac{\beta(r)}{r^2}\dd r \,=\, \rho \int_{\rho\epsilon}^{\rho r_0} 
  \frac{\beta(r)}{r^2}\dd r - \rho \int_\epsilon^{r_0} \frac{\beta(r)}{r^2}\dd r~,
\]
hence
\[
  \int_\epsilon^{r_0} \frac{\alpha(r)}{r}\dd r + \rho \int_\epsilon^{\rho 
  \epsilon} \frac{\beta(r)}{r^2}\dd r \,\le\, \rho \int_{r_0}^{\rho r_0} 
  \frac{\beta(r)}{r^2}\dd r~. 
\]
Taking $\epsilon \to 0$, we obtain $\int_0^{r_0} \alpha(r)/r\dd r <
\infty$. The integrability condition \eqref{intalpha} is thus
necessary and sufficient for a solution of \eqref{hineq} to exist.
\end{remark}

As a conclusion, we briefly indicate how Proposition~\ref{prodprop}
can be used to establish the nonlinear instability result in
Proposition~\ref{nprop}. In the Hilbert space $B$, consider the map
$\cF(u) = Lu + {\mathcal N}(u)$ whose Fr\'echet derivative $L =
d\cF|_{u=0}$ is a normal operator with spectral radius $r(L) >
1$. Given $1 < \rho < r(L)$, let $P_1 : B \to B$ be the spectral
projection corresponding to the (nonempty) subset of $\sigma(L)$
contained in the annulus $\{\lambda \in \C \,|\, \rho \le |\lambda|
\le r(L)\}$, and let $P_2 = 1 - P_1$.  Even in the absence of spectral
gap, the (orthogonal) projections $P_1, P_2$ can be constructed using
the spectral theorem for bounded normal operators. Let $B_1 = P_1B$,
$B_2 = P_2B$ so that $B_1 \neq \{0\}$ and $B = B_1 \oplus B_2$. If we
denote by $L_1, L_2$ the restrictions of $L$ to the invariant
subspaces $B_1, B_2$, respectively, then estimates \eqref{L1L2bounds}
hold by construction because $L_1, L_2$ are normal operators. Finally,
denoting ${\mathcal N}_1 = P_1 {\mathcal N}$, ${\mathcal N}_2 = P_2
{\mathcal N}$, and writing $u_n = v_n + w_n$ with $v_n = P_1 u_n$ and
$w_n = P_2 u_n$, we see that system \eqref{de} takes the form
\eqref{prodsys}, and assumption \eqref{remest4} on the nonlinearity
${\mathcal N}$ shows that the right-hand side of \eqref{alphadef} is
bounded from above by a multiple of $\alpha(r)$, where $\alpha$ is as
in \eqref{remest4}. Thus, under the integrability condition
\eqref{intalpha}, Proposition~\ref{prodprop} implies that the origin
$u = 0$ is nonlinearly exponentially unstable for system \eqref{de},
at any rate $\rho < r(L)$.
 
\begin{remark}
Proposition~\ref{prodprop} deals in principle with a more general
situation than Proposition~\ref{nprop}, but except in the normal case
considered above it is not clear under which assumptions a linear
operator $L$ without spectral gap can be decomposed into a strongly
unstable part $L_1$ and a weakly unstable part $L_2$ satisfying
estimates of the form \eqref{L1L2bounds}.
\end{remark} 

\section{Appendix : Stabilization in a two-dimensional system} 
\label{sec:app}

We briefly analyze the two-dimensional dynamical system 
associated with the map \eqref{2Dsys}, namely
\begin{equation}\label{2Dsys2}
  v_{n+1} \,=\, v_n + w_n - v_n^3\,, \qquad 
  w_{n+1} \,=\, w_n - w_n^3\,.
\end{equation}
Elementary calculations show that the origin $(u,v) = (0,0)$
in \eqref{2Dsys2} is asymptotically stable. 

\begin{proposition}\label{2Dprop}
If $|v_0| \le 1/2$ and $|w_0| \le 1/8$, the solution of \eqref{2Dsys2} satisfies
\begin{equation}\label{2Dbound}
  |v_n| \,\le\, \max(|v_0|,|w_0|^{1/3})\,,\qquad
  |w_n| \,\le\, |w_0|\,, \qquad \hbox{for all }n \in \N\,.
\end{equation}
In addition $|v_n| + |w_n| \to 0$ as $n \to \infty$. 
\end{proposition}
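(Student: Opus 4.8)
The plan is to use the triangular structure of \eqref{2Dsys2}: the second equation is autonomous, while the first is a perturbation of the scalar iteration $v \mapsto g(v) := v - v^3$ by the forcing term $w_n$. I would first dispose of the $w$-component. Since $w_{n+1} = w_n(1-w_n^2)$ and $|w_0| \le 1/8 < 1$, an immediate induction shows $0 \le 1-w_n^2 \le 1$, hence $|w_{n+1}| \le |w_n|$; in particular $|w_n| \le |w_0|$ for all $n$, and the nonnegative sequence $(w_n^2)$ is nonincreasing. Passing to the limit in $w_{n+1}^2 = w_n^2(1-w_n^2)^2$ and using $w_n^2 \le 1/64$ rules out any nonzero limit, so $w_n \to 0$. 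This already yields the second inequality in \eqref{2Dbound}.

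Next I would establish the uniform bound on $v_n$. Put $M = \max(|v_0|,|w_0|^{1/3}) \le 1/2$. The key elementary facts are that $g$ is odd and strictly increasing on $[-1/2,1/2]$ (since $g'(v) = 1-3v^2 > 0$ there), so it maps $[-M,M]$ into $[-(M-M^3),\,M-M^3]$, and that $|w_n| \le |w_0| \le M^3$ for every $n$ by the previous step. Hence $|v_n| \le M$ forces $|v_{n+1}| = |g(v_n)+w_n| \le (M-M^3)+M^3 = M$, and the first inequality in \eqref{2Dbound} follows by induction.

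Finally, the asymptotic stability. Here I would fix $\epsilon \in (0,1/2)$ and, using $w_n \to 0$, choose $N$ with $|w_n| \le \epsilon^3/2$ for $n \ge N$. Two estimates, both again exploiting the monotonicity of $g$, drive the argument: (i) if $n \ge N$ and $|v_n| \le \epsilon$ then $|g(v_n)| \le \epsilon - \epsilon^3$, so $|v_{n+1}| \le (\epsilon-\epsilon^3)+\epsilon^3/2 < \epsilon$; and (ii) if $n \ge N$ and $|v_n| > \epsilon$ then $|g(v_n)| = |v_n|(1-v_n^2) \le (1-\epsilon^2)|v_n|$, whence $|v_{n+1}| \le (1-\epsilon^2)|v_n| + \epsilon^3/2$. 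Iterating (ii) as long as $|v_m|$ remains above $\epsilon$ gives $|v_n| \le (1-\epsilon^2)^{n-N}|v_N| + \epsilon/2$, which eventually drops below $\epsilon$; so $|v_{n_0}| \le \epsilon$ for some $n_0 \ge N$, and (i) then propagates $|v_n| \le \epsilon$ to all $n \ge n_0$. Thus $\limsup_n |v_n| \le \epsilon$, and since $\epsilon$ is arbitrary $v_n \to 0$; combined with $w_n \to 0$ this gives $|v_n|+|w_n| \to 0$.

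The step I expect to be the main obstacle is this last one, the convergence $v_n \to 0$. The forcing $(w_n)$ is \emph{not} summable --- it decays only like $n^{-1/2}$, exactly as the solution of $w' = -w^3$ --- so a crude Gronwall-type bound $|v_n| \lesssim \prod(\cdots)|v_0| + \sum_k |w_k|$ is useless. The remedy is the two-scale ``absorbing interval'' device above, in which the cubic damping supplies a genuine contraction factor $1-\epsilon^2$ that defeats the now arbitrarily small forcing as soon as $|v_n|$ exceeds the threshold $\epsilon$, after which the interval $[-\epsilon,\epsilon]$ is invariant.
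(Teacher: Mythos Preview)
Your proof is correct and takes a genuinely different route from the paper's. The paper first reduces to the quadrant $v_0,w_0\ge 0$ by the symmetry $(v,w)\mapsto(-v,-w)$ and a separate treatment of the case $v_0\le 0\le w_0$; the uniform bound \eqref{2Dbound} is then obtained by a two-case induction on whether $w_n\le v_n^3$ or $w_n\ge v_n^3$, and the convergence $v_n\to 0$ is proved by showing that the quantity $\delta_n:=v_n^3-w_n$ is eventually nonnegative and stays so (via a recursion for $\delta_{n+1}$ in terms of $\delta_n$), which forces $(v_n)$ to be eventually nonincreasing. You instead work uniformly with absolute values: the monotonicity of $g(v)=v-v^3$ on $[-1/2,1/2]$ together with $|w_n|\le|w_0|\le M^3$ gives the invariant bound $|v_n|\le M$ in one stroke, without any sign bookkeeping; and for convergence you use an absorbing-interval argument, exploiting the contraction factor $1-\epsilon^2$ outside $[-\epsilon,\epsilon]$ to beat the (eventually) small forcing. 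Your approach is shorter and more robust---it is the standard device for a contractive map with vanishing forcing---while the paper's argument extracts a little more structure (eventual monotonicity of $v_n$ in the nonnegative case) at the cost of more casework.
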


\begin{proof} 
We first assume that $0 \le v_0 \le 1/2$ and $0 \le w_0 \le 1/8$. 
Since $w_{n+1} = w_n(1-w_n^2)$, it is clear that the sequence $(w_n)$ 
is nonincreasing and converges to zero as $n \to \infty$, see 
also Remark~\ref{comparison} below. In particular we have 
$0 \le w_n \le w_0$ for all $n \in \N$. As for the first component, 
we show by induction that 
\begin{equation}\label{uncond}
  0 \,\le\, v_n \,\le\, \max(v_0,w_0^{1/3})\,, \qquad
  \hbox{for all }n \in \N\,.
\end{equation}
Indeed, assume that \eqref{uncond} holds for some $n \in \N$.  
If $w_n \le v_n^3$, then $v_{n+1} \le v_n \le \max(v_0,w_0^{1/3})$ 
and $v_{n+1} = v_n(1-v_n^2) + w_n \ge 0$. If $w_n \ge v_n^3$, then 
$v_{n+1} \ge v_n \ge 0$ and
\[
  v_{n+1} \,=\, v_n + w_n - v_n^3 ~\le\, \max_{0 \le u \le w_n^{1/3}}
  \bigl(u + w_n - u^3\bigr) \,=\, w_n^{1/3} \,\le\, w_0^{1/3}\,,
\] 
because the map $u \mapsto u - u^3$ is increasing on the interval
$[0,w_n^{1/3}] \subset [0,1/2]$. This shows that the bounds
\eqref{uncond}, which hold by assumption for $n = 0$, remain valid for
all $n \in \N$.  We also note that the inequality $w_n > v_n^3$ cannot
hold for all $n \in \N$, because in that case the sequence $(v_n)$
would be strictly increasing, and we know that the sequence $(w_n)$
decreases to zero. So there exists $n \in \N$ such that $\delta_n :=
v_n^3 - w_n \ge 0$. Using \eqref{2Dsys2} we deduce that
\begin{align*}
  \delta_{n+1} \,&=\, v_{n+1}^3 - w_{n+1} \,=\, (v_n - \delta_n)^3 - w_n + w_n^3 \\
  \,&=\, \delta_n(1-3v_n^2+3v_n\delta_n-\delta_n^2) + w_n^3 \,\ge\,
  \delta_n(1-3v_n^2-\delta_n^2) \,\ge\, 0\,,
\end{align*}
because $3v_n^2 + \delta_n^2 \le 3v_n^2 + v_n^6 \le 3/4 + 1/64 < 1$
(this follows from \eqref{uncond} and from our assumptions on the
initial data).  Thus we necessarily have $\delta_n \ge 0$ for all
sufficiently large $n \in \N$, which implies that the sequence $(v_n)$
is eventually nonincreasing, hence converges to some limit $\bar u \ge
0$ as $n \to \infty$. As $w_n \to 0$, it follows from \eqref{2Dsys2} 
that $\bar u = \bar u - \bar u^3$, hence $\bar u = 0$. This concludes 
the proof in the case where $v_0 \ge 0$ and $w_0 \ge 0$.

We next assume that $-1/2 \le v_0 \le 0$ and $0 \le w_0 \le 1/8$. 
In that case, as long as $v_n \le 0$, the sequence $(v_n)$ is 
nondecreasing because $w_n - v_n^3 \ge 0$. So either $v_n \le 0$
for all $n \in \N$, in which case $v_n \to 0$ as $n \to \infty$, 
or there exists a first integer $n \in \N$ such that $v_n \le 0$ and
$v_{n+1} > 0$. In that case we have $v_{n+1} = v_n(1-v_n^2) + w_n \le 
w_n$, hence $v_{n+1} \in [0,1/2]$, $w_{n+1} \in [0,1/8]$, and 
we are back to the situation studied above where both components 
are nonnegative. We deduce that \eqref{2Dbound} holds in all 
cases and that $|v_n| + |w_n| \to 0$ as $n \to \infty$. 

Finally, the same conclusions hold if $w_0 \le 0$, because system
\eqref{2Dsys2} is clearly invariant under the transformation 
$(v_n,w_n) \mapsto (-v_n,-w_n)$. This concludes the proof. 
\end{proof}

\begin{remark}\label{comparison}
In fact, a standard comparison argument with the continuous time 
ODE $w' = -w^3$ shows that, if $3w_0^2 \le 1$, the solution
$w_n$ of \eqref{2Dsys2} satisfies
\[
  |w_n| \,\le\, \frac{|w_0|}{\sqrt{1+2|w_0|^2n}}\,, \qquad 
  \hbox{for all }n \in \N\,.
\]
In particular $|w_n| = \cO(n^{-1/2})$ as $n \to \infty$.
\end{remark}

\end{document}